\documentclass[onefignum,onetabnum]{siamart220329}

\usepackage{amssymb}
\usepackage{graphicx}
\usepackage{algorithmic}
\usepackage[caption=false]{subfig}
\usepackage{multirow}
\usepackage{microtype}
\allowdisplaybreaks[4]


\newcommand{\bsmallmatrix}[1]{\begin{bmatrix}\begin{smallmatrix}
#1\end{smallmatrix}\end{bmatrix}}

\newsiamremark{remark}{Remark}
\newsiamremark{hypothesis}{Hypothesis}
\crefname{hypothesis}{Hypothesis}{Hypotheses}
\newsiamthm{claim}{Claim}
\ifpdf
  \DeclareGraphicsExtensions{.eps,.pdf,.png,.jpg}
\else
  \DeclareGraphicsExtensions{.eps}
\fi


\headers{A CJ-FEAST GSVDsolver for the partial GSVD computation}{Z. JIA AND K. Zhang}
\title{A CJ-FEAST GSVDsolver for computing a partial GSVD of a large matrix pair with the generalized singular values in a given interval\thanks{\funding{Supported in part by the National Natural Science Foundation of China
(No.12171273)}}}

\author{Zhongxiao Jia\thanks{Department of Mathematical Sciences, Tsinghua
University, 100084 Beijing, China
  (\email{jiazx@tsinghua.edu.cn}).}
\and Kailiang Zhang\thanks{Department of Mathematical Sciences, Tsinghua
University, 100084 Beijing, China
  (\email{zkl18@tsinghua.org.cn}).}}
\usepackage{amsopn}
\DeclareMathOperator{\diag}{diag}


\begin{document}

\maketitle

\begin{abstract}
We propose a CJ-FEAST GSVDsolver to compute a partial
generalized singular value decomposition (GSVD) of a large matrix pair
$(A,B)$ with the generalized singular values in a given interval.
The solver is a highly nontrivial extension of the FEAST
eigensolver for the (generalized) eigenvalue problem and CJ-FEAST SVDsolver
for the SVD problem.
For a partial GSVD problem, given three left and right searching
subspaces, we propose a general
projection method that works on $(A,B)$ {\em directly}, and computes
approximations to the desired GSVD components. For the concerning GSVD
problem, we exploit the Chebyshev--Jackson (CJ) series to
construct an approximate spectral projector of the generalized eigenvalue
problem of the matrix pair $(A^TA,B^TB)$ associated
with the generalized singular values of interest, and use
subspace iteration on it to generate a  right subspace. Premultiplying
it with $A$ and $B$ constructs two left subspaces. Applying the
general projection method to the subspaces constructed leads to the CJ-FEAST
GSVDsolver. We derive accuracy estimates for the approximate spectral
projector and its eigenvalues, and establish a number of convergence
results on the underlying subspaces
and the approximate GSVD components obtained by
the CJ-FEAST GSVDsolver. We propose general-purpose choice strategies for the
series degree and subspace dimension.
Numerical experiments illustrate the efficiency of the CJ-FEAST GSVDsolver.
\end{abstract}

\begin{keywords}
    GSVD, Chebyshev--Jackson series, spectral projector, Jackson damping factor,
    pointwise convergence, subspace iteration, convergence rate
\end{keywords}

\begin{MSCcodes}
    15A18,  65F15,  65F50
\end{MSCcodes}

\section{Introduction}
The generalized singular value decomposition (GSVD) of a matrix pair $(A,B)$
was introduced by Van Loan \cite{van1976generalizing} and developed by Paige and Saunders
\cite{paige1981towards}. It is a generalization of the
singular value decomposition (SVD) of a single matrix to a matrix pair,
and has been a standard matrix decomposition
\cite{bjorck1996numerical,golub2013matrix,stewart1998matrix,stewart2001matrix}.
The GSVD not only provides us with an important mathematical tool but also plays a key role in
a number of disciplines and scientific computing,
e.g., linear discrete ill-posed problems with general-form regularization
 \cite{hansen1998rank}, weighted least squares and total least squares
problems \cite{bjorck1996numerical}, information retrieval, linear discriminant
analysis, and many others \cite{vanhfuffel2013total}.

Let $A\in\mathbb{R}^{m_1\times n}$ and $B\in\mathbb{R}^{m_2\times n}$ with $m_1+m_2\geq n$,
and assume that the stacked matrix $[A^T,B^T]^T$ has full column rank with
the superscript $T$ the transpose of a matrix,
i.e., the null space intersection $\mathcal{N}(A) \cap \mathcal{N}(B)=\{\textbf{0}\}$.
Then the pair $(A,B)$ is called {\em regular}, the same terminology
as for the generalized eigenvalue problem \cite{stewart1990matrix}.
Denote $q_1=\dim(\mathcal{N}(A))$ and $q_2=\dim(\mathcal{N}(B))$,  $q=n-q_1-q_2$,
and $l_1=\dim(\mathcal{N}(A^T))$ and $l_2=\dim(\mathcal{N}(B^T))$.
Then the GSVD of $(A,B)$ is
\begin{equation}\label{gsvd}
    \begin{cases}
        U^TAX=\Sigma_A=\diag\{C_A,\mathbf{0}_{l_1, q_1},I_{q_2}\}, \\
        V^TBX=\Sigma_B=\diag\{S_B,I_{q_1},\mathbf{0}_{l_2, q_2}\},
    \end{cases}
\end{equation}
where $X=[X_q,X_{q_1},X_{q_2}]$ is nonsingular,
$U=[U_q,U_{l_1},U_{q_2}]$ and $V=[V_q,V_{q_1},V_{l_2}]$  are orthogonal,
and the diagonal $C_A=\diag\{c_1,\dots,c_q\}$ and
$S_B=\diag\{s_1,\dots,s_q\}$ satisfy
\begin{equation*}
  0<c_i,s_i<1 \quad\mbox{and}\quad
 c_i^2+s_i^2=1,  \quad i=1,\dots,q;
\end{equation*}
see \cite[p. 309]{golub2013matrix}.
In order to distinguish the block submatrices of $X,U,V$,
we have denoted by the subscripts their column numbers,
by $I_{k}$ and $\mathbf{0}_{k,l}$ the identity matrix of order $k$ and zero
matrix of order $k\times l$, respectively, with the subscript
$k$ dropped whenever it is clear from the context,
and by $e_i$ the $i$th column of $I$.

We call the pairs $\{c_i,s_i\}$ or
$\sigma_i=c_i/s_i,\ i=1,2,\ldots,q$ the {\em nontrivial}
generalized singular values of $(A,B)$ and the corresponding columns $u_i,v_i,x_i$
of $U_q,V_q$ and $X_q$ the left and right generalized singular
vectors.
The partitioned matrix $[X_{q_1}, X_{q_2}]$ corresponds to the {\em trivial}
$q_1$ multiple zero and $q_2$ multiple infinite generalized singular values,
written as $\{c,s\}=\{0,1\}$ and $\{c,s\}=\{1,0\}$, respectively.
We will order the $q$ generalized singular values $\{c_i,s_i\}$ later.
It follows
from \eqref{gsvd} that $X^T(A^TA+B^TB)X=I_n$; that is, $X$ is
$(A^TA+B^TB)$-orthogonal, and its columns are $(A^TA+B^TB)$-orthonormal.
If both $A$ and $B$ are of full column rank, then $q=n$.

In this paper, we are concerned with the computation of
a partial GSVD of a large regular $(A,B)$ with all the generalized singular values
$\sigma_i$ in a given interval. In terms of
the one-to-one correspondence of $c$ and $\sigma$ with
subscripts $i$ dropped, this amounts to the problem:
Given an interval $[c_{\min},c_{\max}]\subset (0,1)$, we want to
compute all the $n_{gv}$ generalized singular quintuples $(c,s,u,v,x)$,
called the GSVD components, with $c\in [c_{\min},c_{\max}]$. Keep in mind
that, in terms of GSVD \eqref{gsvd}, the GSVD components can
be written in the matrix-vector form:
\begin{equation}\label{gsvdvector}
    \begin{cases}
        Ax=c u,   \\
        Bx=s v, \\
        s A^Tu=c B^Tv.
    \end{cases}
\end{equation}

Unlike a partial SVD of a single large matrix,
not much work has been done for the partial GSVD computation
of the large matrix pair
$(A,B)$. Zha \cite{zha1996computing} proposes a joint bidiagonalization (JBD)
method to compute a few {\em extreme} GSVD components of $(A,B)$.
The method is based on a JBD process that successively reduces
$(A,B)$ to a sequence of upper bidiagonal pairs, from which
approximate GSVD components are computed. Kilmer, Hansen and Espa\~{n}ol
\cite{kilmer2007projection} adapt
the JBD process to the linear discrete ill-posed problem with general-form
regularization, and develop a variant of the JBD process that reduces
$(A,B)$ to lower-upper bidiagonal forms. Based on it, they propose
a hybrid projection method. Jia and
Yang~\cite{JiaYang2020} present a new JBD process based iterative algorithm
for the ill-posed problem, which is simpler and cheaper than
and computes a regularized solution at least as accurately as the hybrid
method in \cite{kilmer2007projection}.
They have considered the convergence of extreme generalized singular values.
Jia and Li \cite{jiali2023} have made a detailed numerical analysis on the JBD
method for the GSVD computation and shown that it suffices to maintain
the semi-orthogonality of the four sets of computed Lanczos-type vectors
in finite precision arithmetic when computing generalized
singular values accurately,
where the semi-orthogonality means that two vectors
are numerically orthogonal to the level of $\epsilon_{\mathrm{mach}}^{1/2}$
with $\epsilon_{\mathrm{mach}}$ being the machine precision.
Jia and Li \cite{jia2021joint} propose an effective and efficient partial
reorthogonalization strategy to maintain the desired semi-orthogonality
so as to avoid convergence delay and the
appearance of duplicate approximate generalized singular values. To be practical,
Alvarruiz, Campos and Roman \cite{alvaruiz2022} have recently
developed an explicit thick-restart JBD algorithm for the GSVD computation.

Hochstenbach \cite{hochstenbach2009jacobi} presents a Jacobi--Davidson (JD)
GSVD (JDGSVD) method to compute a number of GSVD components
of $(A,B)$ with the desired generalized
singular values closest to a given target.
The method formulates the GSVD of $(A,B)$ as the
generalized eigendecomposition of the augmented and cross-product matrix pair
$\big(\bsmallmatrix{&A\\A^T&},\bsmallmatrix{I&\\&B^TB}\big)$ or
$\big(\bsmallmatrix{&B\\B^T&},\bsmallmatrix{I&\\&A^TA}\big)$,
which requires that $B$ or $A$ be of full
column rank, computes the relevant eigenpairs,
and recovers the desired GSVD components from the converged eigenpairs.
At each subspace expansion step, for $\big(\bsmallmatrix{&A\\A^T&},\bsmallmatrix{I&\\&B^TB}\big)$,
the method solves an $(m_1+n)$-dimensional correction equation approximately, and
the lower $n$-dimensional and upper $m_1$-dimensional parts of the approximate
solution are used to expand the right searching subspace
and one left searching subspace; similarly,
for $\big(\bsmallmatrix{&B\\B^T&},\bsmallmatrix{I&\\&A^TA}\big)$,
$(m_2+n)$-dimensional correction equation is approximately solved.
Huang and Jia \cite{huangjia2021} have proved that
there is a constant multiple $\kappa(B^TB)=\kappa^2(B)$ or
$\kappa(A^TA)=\kappa^2(A)$ in the error bounds for the
computed eigenvectors, where $\kappa(\cdot)$ denotes the $2$-norm
condition number of a matrix.
Consequently, with $B$ or $A$ ill conditioned, the computed GSVD components
may have poor accuracy,
which has been numerically confirmed \cite{huangjia2021}.
Therefore, the two formulations of generalized eigenvalue problems
are not numerically equivalent in finite precision arithmetic.
One prefers the first formulation
if $\kappa(B)<\kappa(A)$, otherwise takes the second.
In general, the transforming
a GSVD problem to the generalized eigenvalue problem of some
augmented and/or cross-product matrix form is not of generality
due to possible accuracy loss, poor numerical stability and worse conditioning, etc.

Huang and Jia \cite{huangjia2023} have recently proposed
a cross-product free JDGSVD (CPF-JDGSVD) method
to compute several GSVD components of $(A,B)$ with the generalized
singular values closest to a given target. The CPF-JDGSVD method is
$A^TA$ and $B^TB$ free at the extraction stage, and it premultiplies
the right searching subspace by $A$ and $B$ to construct two left ones
separately. The method implicitly realizes the Rayleigh--Ritz projection of the
generalized eigenvalue problem of $(A^TA,B^TB)$ onto a given
right searching subspace.
At the subspace expansion stage, it suffices to iteratively solve an $n$-by-$n$
correction equation with low or modest accuracy,
and its approximate solution is used to expand the searching subspaces.
The method is numerically backward stable and can compute GSVD components
much more accurately than JDGSVD in \cite{hochstenbach2009jacobi}.
Furthermore, to overcome the possible irregular and slow convergence of
CPF-JDGSVD when the desired generalized singular values are interior,
Huang and Jia \cite{huangjia2022twoharmonic} have proposed two harmonic
extraction based JDGSVD-type methods, which have better convergence behavior
and are more efficient than CPF-JDGSVD.

Over the past two decades, the Sakurai--Sugiura (SS) method
\cite{sakurai2003projection}
and the FEAST eigensolver \cite{polizzi2009density} have been proposed
for computing the eigenpairs of a large matrix or matrix pair with
the eigenvalues in a given region.
The SS method and its variants SS--RR and SS-Arnoldi methods
have been intensively studied in, e.g.,
\cite{ikegami2010filter,imakura2014SSarnoldi,sakurai2007cirr},
and the z-Pares package has been developed \cite{Futamura2014Online}.
The SS--RR and SS-Arnoldi methods and the FEAST eigensolver
both construct a good approximate spectral projector
associated with all the eigenvalues in the region of interest.
These methods are based on the contour integral formula
of the exact spectral projector, and exploit some numerical quadrature to
implicitly construct an approximate spectral projector and explicitly
compute its product with vectors \cite{polizzi2009density,sakurai2003projection}, which involves
solutions of large shifted linear systems.
Other rational filtering approaches can be found in
\cite{guttel2015zolotarev,Kolling2021rationalspectral,Ruhe1998rationalkrylov}.
These two methods are Krylov or block Krylov subspace
based methods when the contour is a circle and the trapezoidal rule
is used, and they realize the Rayleigh--Ritz projection and compute
Ritz approximations \cite{sakurai2016}.
The FEAST eigensolver is a subspace iteration based method, and it
generates a sequence of subspaces of fixed dimension,
onto which the Rayleigh--Ritz projection of the original matrix or matrix pair
is realized and the Ritz approximations are computed.
The solver has been intensively investigated in, e.g., \cite{guttel2015zolotarev,kestyn2016feast,tang2014feast},
and the FEAST numerical library has been available \cite{polizzi2020feast}.

In the contour integral-based FEAST eigensolver, one needs to solve
several large shifted linear systems at each iteration.
These linear systems are typically indefinite and some of them
could be ill conditioned, which is definitely the case if
a quadrature node is close to some eigenvalue,
so that Krylov subspace iterative methods, e.g.,
the MINRES or GMRES method \cite{saad2003iterative}, may be excessively
slow. On the other hand, in finite precision arithmetic, as is shown
theoretically and confirmed numerically in \cite{jia2022afeastsvdsolver,jia2023augmentedCJFEAST},
the contour integral-based FEAST eigensolver and
SVDsolver have intrinsic deficiencies: (i) the ultimately attainable
accuracy of approximate solutions of some of these linear systems may not
fulfill the convergence requirement of the solver, and (ii) even if it
converges mathematically, the achievable residual norms of approximate
eigenpairs or singular triplets cannot drop below a reasonable
stopping tolerance in finite precision arithmetic.

Recently, Imakura and Sakurai \cite{imakura2021complexmoment}
have adapted the SS method to the SVD problem and
proposed an algorithm that can compute the singular values in a given interval.
The authors of the current paper have extended the FEAST eigensolver
to the SVD problem and proposed two CJ-FEAST SVDsolvers that can compute the
singular values in a given interval \cite{jia2022afeastsvdsolver,jia2023augmentedCJFEAST}.
Each of these two SVDsolvers has its own merit and disadvantage, and one can
easily make a proper choice between them, depending on sizes of the desire singular values, as has been shown in \cite{jia2023augmentedCJFEAST}.
A distinctive feature of CJ-FEAST SVDsolvers
is that rather than exploiting the contour
integral formula and any numerical quadrature or rational filtering,
we construct an approximate spectral projector
with arbitrary accuracy by the Chebyshev--Jackson (CJ) series expansion
\cite{jay1999electronic,rivlin1981introduction} without
solving any shifted linear system. We have quantitatively established sharp
pointwise error bounds for the approximation of the CJ series to a
specific step function whose values 1, $0.5$ and $0$
correspond to the eigenvalues of the underlying exact spectral
projector. Based on them, we have derived the accuracy estimate of the
constructed approximate spectral
projector, analyzed its eigenvalue distribution,
established a number of convergence results on the CJ-FEAST SVDsolvers,
and proposed reliable and practical strategies for the series degree and
subspace dimension. The solvers are numerically illustrated to be
much more efficient and also more robust
than the corresponding contour integral-based solvers. Remarkably, as has been
addressed in \cite{jia2022afeastsvdsolver,jia2023augmentedCJFEAST},
the CJ-FEAST SVDsolver is directly applicable to the real
symmetric or Hermitian eigenvalue problem.

The GSVD has several fundamental distinctions with the SVD and
generalized eigendecomposition of a matrix pair. For instance,
(i) the GSVD itself is considerably more complicated
than the SVD, where each GSVD component is a
quintuple other than a triple, and there is
one {\em non}-orthogonal
right generalized singular vector matrix $X$ and {\em two} orthogonal left
generalized singular vector matrices $U$ and $V$; (ii) there may be trivial
zero and infinite generalized singular values;
(iii) the underlying spectral projector and its approximations
are {\em no longer} symmetric;
(iv) the GSVD is also different from the generalized eigendecomposition,
which provides information on generalized eigenpairs or
the generalized eigenvalues and pairs of left and right generalized
eigenvectors. As a result,
the partial GSVD computation under consideration is substantially more
difficult and complicated than solving standard or generalized
eigenvalue problems and SVD problems.
Among others, there are two difficult ingredients that
must be provided: (i) construct {\em three} reasonable
left and right searching subspaces; (ii) propose a reasonable
projection method that works on $(A,B)$ {\em directly} without involving any
cross-product or augmented matrix, so that the extracted approximate
GSVD approximations converge if the searching subspaces contain
sufficiently accurate approximations to the desired generalized
singular vectors.

It is known from \eqref{gsvdvector} that
the generalized singular values $(c,s)$
and right generalized singular vectors $x$ of the pair $(A,B)$ are the
generalized eigenpairs of the cross-product pair $(A^TA,B^TB)$:
$s^2A^TAx=c^2B^TBx$. Therefore, with the nonzero $c,s$ and $x$ available,
the left generalized singular vectors $u=Ax/s$ and $v=Bx/c$.
In principle, the SS-type methods and the FEAST eigensolver can be
straightforwardly adapted to the mathematically equivalent
generalized eigenvalue problem $(A^TA,B^TB)$ with
the singular values with $c\in [c_{\min},c_{\max}]$ and $x$.
However, this involves the cross-product matrices $A^TA$ and $B^TB$,
and is thus unattractive and out of favor due to numerous reasons,
e.g., the worse conditioning of the generalized eigenvalue problem, the
possible accuracy loss of the computed approximations,
the loss of numerical orthogonality of approximate left
generalized singular vectors recovered from the {\em converged}
approximate right generalized singular vectors when $c$ or $s$ is small.

In this paper, for a partial GSVD problem, we propose a general
projection method that works on $A$ and $B$ directly for given
left and right searching subspaces, and the method
becomes the standard extraction approach
proposed in \cite{huangjia2023} for two specially chosen left
subspaces obtained by premultiplying the right one with $A$ and
$B$, respectively. For the aforementioned GSVD problem,
by exploiting the fact that the GSVD $(A,B)$ is mathematically
equivalent to the generalized
eigendecomposition of the symmetric positive definite (SPD) matrix pair
$(A^TA-B^TB,A^TA+B^TB)$, whose generalized eigenpairs
are $(c^2-s^2,x)$, we construct an approximation to the spectral projector of
this generalized eigenvalue problem with $c\in [c_{\min},c_{\max}]$
by using the CJ series expansion. We then
apply subspace iteration to the approximate spectral projector, and
generate a sequence of approximate right generalized singular
subspaces associated with $c\in [c_{\min},c_{\max}]$.
At extraction phase, we apply the general projection
method to the approximate left and right
subspaces with the two left ones formed by premultiplying the right one
with $A$ and $B$, respectively, leading to  a specific
CJ-FEAST GSVDsolver. Mathematically,
as far as the generalized singular values and right generalized
singular vectors are concerned, we show that the method is mathematically
equivalent to the Rayleigh--Ritz projection of the generalized
eigenvalue problem of $(A^TA,B^TB)$ with respect to the
right subspace.

As is expected, the convergence analysis of the CJ-FEAST GSVDsolver is
much more involved and complicated than that of the CJ-FEAST SVDsolver, and a
reliable determination of the number $n_{gv}$ of desired GSVD components
with $c\in [c_{\min},c_{\max}]$ and that of the subspace dimension are more
subtle. Based on the theoretical results to be established,
we will propose practical selection strategies for the CJ series degree
$d$ and the subspace dimension $p$. The solver critically requires
$p\geq n_{gv}$; otherwise it fails.
We will establish a number of convergence results on the underlying
approximate subspaces and the Ritz values and vectors, showing
how fast they converge in terms of the accuracy of the approximate
spectral projector and its eigenvalues.

In \Cref{sec: preliminaries}, we introduce preliminaries, and propose
a class of general projection methods for the partial GSVD problem. Then we
present an algorithmic sketch of the specific CJ-FEAST GSVDsolver.
In \Cref{sec: CJ-FEAST GSVD}, we develop a detailed CJ-FEAST
GSVDsolver, establish estimates for the accuracy and eigenvalues of the
approximate spectral projector, and consider reliable estimation of
$n_{gv}$ and some implementation details.
In \Cref{sec: convergence analysis}, we present the convergence results
on the CJ-FEAST GSVDsolver. \Cref{sec:numerical experiments}
reports numerical experiments to justify theoretical results
and illustrate the performance of the CJ-FEAST GSVDsolver.
In \Cref{sec: conclusion}, we conclude the paper.

\section{Preliminaries and
a class of general projection methods}\label{sec: preliminaries}

\subsection{Preliminaries}

From \eqref{gsvd}, the generalized eigendecomposition of the SPD matrix
pair $(A^TA-B^TB,A^TA+B^TB)$ is
\begin{equation}\label{eigd}
    X^T(A^TA-B^TB)X=\Sigma_{A}^T\Sigma_{A}-\Sigma_{B}^T\Sigma_{B}, \
     X^T(A^TA+B^TB)X=I_n.
\end{equation}
This means that the $q$ nontrivial generalized singular values $\sigma=c/s$ and
associated right generalized singular vectors $x$ of $(A,B)$ correspond to
the eigenvalues $c^2-s^2=2c^2-1$ of $(A^TA-B^TB,A^TA+B^TB)$ and the
associated eigenvectors. Meanwhile,
$(A^TA-B^TB,A^TA+B^TB)$ has $q_1$ multiple eigenvalues $-1$
and $q_2$ multiple eigenvalues $1$, which correspond to the trivial
$q_1$ multiple zero and $q_2$ multiple infinite generalized
singular values of $(A,B)$, respectively. Therefore, the generalized singular
values of $(A,B)$ are mapped to the spectral
interval $[-1,1]$ of  $(A^TA-B^TB,A^TA+B^TB)$.

More generally, the GSVD of $(A,B)$ corresponds to the generalized
eigendecomposition of the parameterized matrix pair
$(\eta A^TA-\xi B^TB,\gamma A^TA+\zeta
B^TB)$ for any {\em nonnegative} $\eta,\xi,\gamma,\zeta$ with the pairs
$\{\eta,\xi\}\neq\{0,0\}, \{\gamma,\zeta\}\neq\{0,0\},
\{\eta,\gamma\}\neq\{0,0\}$ and $\{\xi,\zeta\}\neq\{0,0\}$.
The generalized eigenvalues are the pairs $\{\eta c^2-\xi s^2, \gamma
c^2+\zeta s^2\}=\{(\eta+\xi)c^2-\xi,(\gamma-\zeta)c^2+\zeta\}$,
and the corresponding generalized eigenvectors are $x$.
The $q_1$ multiple zero and $q_2$ multiple infinite generalized singular
values correspond to
the $q_1$ multiple generalized eigenvalues $\{-\xi,\zeta\}$ and
the $q_2$ multiple generalized eigenvalues $\{\eta,\gamma\}$ of the
parameterized matrix pair.
Particularly, the generalized singular values of $(A,B)$ with
$c\in [c_{\min},c_{\max}]\in (0,1)$ are mapped to the generalized eigenvalues
in $[\frac{(\eta+\xi)c_{\min}^2-\xi}{(\gamma-\zeta)c_{\min}^2+\zeta},
\frac{(\eta+\xi)c_{\max}^2-\xi}{(\gamma-\zeta)c_{\max}^2+\zeta}]$.
Notice that $\gamma A^TA+\zeta B^TB$ is unconditionally
positive semi-definite for any nonnegative pair $\{\gamma,\eta\}\neq \{0,0\}$
but the positive definiteness requires that $A$ or $B$ be of full column rank
when $\gamma=1,\zeta=0$ or $\gamma=0,\zeta=1$.

From now on, write
\begin{equation}\label{defS}
 H=A^TA+B^TB,\ \   S = H^{-1}(A^TA-B^TB).
\end{equation}
From \eqref{eigd}, we have $X^{-1}=X^TH$, and the
eigendecomposition of $S$ is
\begin{equation}\label{sdecomp}
    S=X(\Sigma_{A}^T\Sigma_{A}-\Sigma_{B}^T\Sigma_{B})X^{-1}
    =X(\Sigma_{A}^T\Sigma_{A}-\Sigma_{B}^T\Sigma_{B})X^TH.
\end{equation}
Therefore, the generalized singular values $\{c,s\}$ of $(A,B)$ correspond to
the eigenvalues $c^2-s^2$ of $S$.
Although we will use the SPD matrix pair $(A^TA-B^TB,H)$,
all the results and analysis can be straightforwardly adapted to
the parameterized pair $(\eta A^TA-\xi B^TB,
\gamma A^TA+\zeta B^TB)$.
The two mathematical changes are (i) to replace $S$ by $\bar{S}=
(\gamma A^TA+\zeta B^TB)^{-1}(\eta A^TA-\xi B^TB)$ in
the context and (ii) to modify the function $l(z)$ in the beginning of
\Cref{sec: CJ-FEAST GSVD} correspondingly.

Given an interval $[c_{\min},c_{\max}]\subset [0,1]$, suppose we are
interested in all the generalized singular values $\sigma=c/s$ with
$c \in [c_{\min},c_{\max}]$ and/or
the left and right generalized singular vectors $u,v$ and $x$.
Then the desired generalized singular values of $(A,B)$ correspond to
the eigenvalues $2c^2-1\in [2c_{\min}^2-1,2c_{\max}^2-1]$ of $S$.
Define
\begin{equation}\label{ps}
    P_{S}=X_{in}X_{in}^T H +\frac{1}{2}X_{e}X_{e}^TH,
\end{equation}
where $X_{in}$ consists of the columns of $X$ corresponding to the eigenvalues of
$S$ in the open interval $(2c_{\min}^2-1,2c_{\max}^2-1)$ and $X_{e}$ consists
of the columns of $X$ corresponding to the eigenvalues of $S$ that equal
$2c_{\min}^2-1$ or $2c_{\max}^2-1$. The eigenvalues of $P_S$
are $1,\frac{1}{2}$ and 0. We will call $P_{S}$
the spectral projector of $S$ associated with $c\in [c_{\min},c_{\max}]$.

From \eqref{gsvdvector}, the GSVD residual of an approximate generalized
singular quintuple $(\hat{c},\hat{s},
\hat{u},\hat{v},\hat{x})$ with $\hat{c}^2+\hat{s}^2=1$ is
\begin{equation}\label{res}
    r=r(\hat{c},\hat{s},\hat{u},\hat{v},\hat{x})=\begin{bmatrix}
        A\hat{x}-\hat{c} \hat{u}\\
        B\hat{x}-\hat{s} \hat{v}\\
        \hat{s}A^T\hat{u}-\hat{c}B^T\hat{v}
    \end{bmatrix},
\end{equation}
and the size of $\|r\|$ will be used to judge the convergence
of $(\hat{c},\hat{s},\hat{u},\hat{v},\hat{x})$.

We introduce the $H$-inner product of two real vectors $y$ and $z$ by
\begin{equation}\label{hinner}
    \langle y,z\rangle_{H}=y^THz,
\end{equation}
which induces the $H$-norm
\begin{equation}\label{hnorm}
    \|y\|_{H}=\langle y,y\rangle_{H}^{\frac{1}{2}}=(y^T Hy)^{\frac{1}{2}}=\|H^{\frac{1}{2}}y\|
\end{equation}
with $H^{\frac{1}{2}}$ being the square root of $H$,
where $\|\cdot\|$ denotes the vector 2-norm and
the induced matrix norm. We
define the $H$-angle $\angle( y,z )_{H}$ of $y$ and $z$ via
\begin{equation}\label{hangle}
    \cos\angle( y,z )_{H}=\frac{\langle y,z\rangle_{H}}{\|y\|_{H}\|z\|_{H}}, \quad \sin\angle( y,z )_{H}
    =\sqrt{1-\cos^2\angle( y,z )_{H}}.
\end{equation}
We comment that the $H$-inner product, the induced norm and angle are naturally
valid for complex vectors, provided the transpose is replaced by the conjugate
transpose and $H$-orthogonal by $H$-unitary. But the current context involves
only real vectors.

Suppose that $[Z_1,Z_2]$ and $[W_1,W_2]$ are two $n\times n$ $H$-orthogonal
matrices in conforming partitions. Since $H^{\frac{1}{2}}W_1$
and $H^{\frac{1}{2}}Z_2$ are column orthonormal, the 2-norm distance between
the subspaces span$\{Z_1\}$ and span $\{W_1\}$
(cf. \cite[section 2.5.3]{golub2013matrix}) is
\begin{equation}\label{C-distdef}
    {\mathrm{dist}}({\mathrm{span}}\{Z_1\}, {\mathrm{span}}\{W_1\})=\|Z_2^T H W_1\|.
\end{equation}

For any scalars $\lambda$ and $\mu$,
the chordal metric $\chi (\lambda, \mu)$ between them is defined by
\begin{equation*}
    \chi (\lambda, \mu) = |\lambda-\mu|/\sqrt{1+|\lambda|^2}\sqrt{1+|\mu|^2},
\end{equation*}
which is used to measure the error of approximate and exact
eigenvalues of a regular matrix pair \cite[Chapter 15]{parlett1998symmetric}.

\subsection{A class of general projection methods}

For any given left and right searching subspaces $\mathcal{U}$,
$\mathcal{V}$ and $\mathcal{X}$ with
\begin{equation}\label{dim}
\min\{\dim(\mathcal{U}),\dim(\mathcal{V})\}\geq \dim(\mathcal{X}),
\end{equation}
we now propose a class of general projection methods
for the computation of a desired partial GSVD. The method finds
scalars $\hat{c}\geq 0$ and $\hat{s}\geq 0$ with $\hat{c}^2+\hat{s}^2=1$ and
vectors $\tilde u\in\mathcal{U}$, $\tilde v\in\mathcal{V}$, $\hat
x\in\mathcal{X}$ with $\|\tilde u\|=\|\tilde v\|=
\|\hat x^T\|_H=1$ that satisfy the requirements
\begin{equation}\label{pgsvd}
	\left\{\begin{aligned}
		A\hat x-\hat{c}\hat u&\perp\mathcal{U},\\
		B\hat x-\hat{s}\hat v&\perp\mathcal{V},\\
		\hat{s} A^T\hat u-\hat{c} B^T\hat v&\perp\mathcal{X},
	\end{aligned}\right.
\end{equation}
and uses the $(\hat{c},\hat{s},\hat u,\hat v,\hat x)$ as approximations
to some of the GSVD components $(c,s,u,v,x)$ of $(A,B)$.

Particularly, for a given right searching subspace
$\mathcal{X}$, one natural choice for two left searching subspaces is
\begin{equation}\label{leftsub}
    \mathcal{U}=A\mathcal{X},\ \mathcal{V}=B\mathcal{X},
\end{equation}
which was proposed in \cite{huangjia2022twoharmonic,huangjia2023}
and will be adopted in this paper. The rationale of
\eqref{leftsub} is that $\mathcal{U}$ and $\mathcal{V}$
are equal to the corresponding
left generalized singular subspaces with $A$ and $B$ when
$\mathcal{X}$ is an exact right generalized singular
subspace of $(A,B)$, as commented in
\cite{huangjia2023}. In this case, an informal
justification for the method \eqref{pgsvd} is that
all the $(\hat{c},\hat{s},\hat u,\hat v,\hat x)$
are exact GSVD components
of $(A,B)$. A continuity argument suggests that if
$\mathcal{X}$ is nearly a right generalized singular subspace of $(A,B)$
then $\mathcal{U}$ and $\mathcal{V}$ are nearly left generalized
singular subspaces with $A$ and $B$ and all the
$(\hat{c},\hat{s},\hat u,\hat v,\hat x)$ should be nearly GSVD components of $(A,B)$.

With $\mathcal{U}$ and $\mathcal{V}$ defined as \eqref{leftsub}, we have
\begin{equation*}
    A\hat x-\hat{c}\hat u\in \mathcal{U},\
    B\hat x-\hat{s}\hat v \in \mathcal{V},
\end{equation*}
and the first two orthogonality conditions in \eqref{pgsvd} become
\begin{equation}\label{Ax=cu, Bx=sv}
    A\hat x-\hat{c}\hat u=\mathbf{0},\
    B\hat x-\hat{s}\hat v=\mathbf{0},
\end{equation}
which are the projection requirements in
\cite{huangjia2022twoharmonic,huangjia2023}.

It is straightforward from \eqref{Ax=cu, Bx=sv} that
the $(\hat{c}^2,\hat x)$ are the Ritz approximations of
the standard Rayleigh--Ritz projection of the generalized eigenvalue problem
of the SPD matrix pair $(A^TA,H)$ onto the subspace $\mathcal{X}$
and satisfy
\begin{equation}\label{cpp}
    (A^TA-\hat{c}^2 H)\hat x\perp\mathcal{X}.
\end{equation}
Similarly, let $\hat\sigma=\hat{c}/\hat{s}$. Then
the $(\hat\sigma^2,\hat x)$
are the Ritz approximations of $(A^TA,B^TB)$ with respect to $\mathcal{X}$:
\begin{equation}\label{cpp2}
    (A^TA-\hat\sigma^2 B^TB)\hat x\perp\mathcal{X}.
\end{equation}
Therefore, we call $(\hat{c},\hat{s},\hat u,\hat v,\hat x)$
the Ritz approximations of $(A,B)$ with respect to the given
left and right subspaces, $(\hat{c},\hat{s})$ or
$\hat\sigma$ the Ritz value, and $\hat u$, $\hat v$
and $\hat x$ the left and right Ritz vectors of $(A,B)$.
Such Rayleigh--Ritz projection is also called the standard
extraction approach, as named in \cite{huangjia2022twoharmonic,huangjia2023}
because of \eqref{cpp2}.

Let the columns of $\widetilde{X},\ \widetilde{U}$ and
$\widetilde{V}$ form orthonormal base of $\mathcal{X},\
\mathcal{U}$ and $\mathcal{V}$, and denote $\hat{x}=\widetilde{X}d$,
$\hat{u}=\widetilde{U}e$ and $\hat{v}=\widetilde{V}f$ with
$\|d\|_H=\|e\|=\|f\|=1$. Then
\eqref{pgsvd} amounts to
\begin{equation}\label{sgsvdg}
	\left\{\begin{aligned}
		(\widetilde{U}^TA\widetilde{X})d&=\hat{c} e,\\
		(\widetilde{V}^TB\widetilde{X})d&=\hat{s} f, \\
		\hat{s} (\widetilde{U}^TA\widetilde{X})^Te &=\hat{c} (\widetilde{V}^TB\widetilde{X})^T f.
	\end{aligned}\right.
\end{equation}
Requirement~\eqref{dim} guarantees that the projection method
computes $\dim(\mathcal{X})$ approximate GSVD components.
\eqref{sgsvdg} shows that $(\hat{c},\hat{s},d,e,f)$ are
the GSVD components of the projection matrix pair
$(\widetilde{U}^TA\widetilde{X},\widetilde{V}^TB\widetilde{X})$,
and we thus need to compute the GSVD of this small matrix pair
to obtain the Ritz approximations.

The JBD method \cite{jiali2023,zha1996computing}
falls into the framework of \eqref{pgsvd}. One benefit is that,
unlike any available projection method working on the generalized
eigenvalue problem of $(A^TA,B^TB)$ or an augmented and
cross-product matrix pair in \cite{hochstenbach2009jacobi},
since the approximations $\hat{u},\hat{v}$ and $\hat{x}$
are computed {\em independently}, they naturally
maintain the desired numerical features, such as numerical
orthogonality and $H$-numerical orthogonality
in finite precision arithmetic, respectively, provided that the GSVD of
the small projection matrix pair is computed accurately
and the orthonormal base of the left and right searching
subspaces are constructed to working precision.

For our GSVD problem, based on the projection
method \eqref{pgsvd}, \Cref{alg:subspace iteration} describes a
framework of the CJ-FEAST GSVDsolver to be studied and developed later,
where $P$ is an approximation to $P_{S}$.  Starting with an initial right
subspace $\mathcal{X}^{(0)}$ with dimension $p\geq n_{gv}$,
subspace iteration on $P$ generates a sequence of
approximate right generalized singular subspaces $\mathcal{X}^{(k)}$, and
the left subspaces are constructed by $\mathcal{U}^{(k)}=A\mathcal{X}^{(k)},
\mathcal{V}^{(k)}=B\mathcal{X}^{(k)}$, $k=0,1,\ldots$.
At each iteration $k$, we realize projection \eqref{pgsvd}
with respect to the left and right subspaces
$\mathcal{U}^{(k)}, \mathcal{V}^{(k)}$ and $\mathcal{X}^{(k)}$,
compute the Ritz approximations
$(\hat{c}^{(k)},\hat{s}^{(k)},\hat{u}^{(k)},\hat{v}^{(k)},\hat{x}^{(k)})$,
and take those for
all the $\hat{c}^{(k)}\in [c_{\min},c_{\max}]$ as approximations of the
desired GSVD components $(c,s,u,v,x)$.
If $P=P_{S}$ defined by \eqref{ps} and the subspace dimension $p=n_{gv}$,
then under the condition that no vector in the initial subspace
$\mathcal{X}^{(0)}$ is $H$-orthogonal to the desired
${\mathrm{span}}\{X_{in},X_{e}\}$,
\Cref{alg:subspace iteration} finds the $n_{gv}$ desired GSVD components
in {\em one} iteration since
$\mathcal{X}^{(1)}={\mathrm{span}}\{X_{in},X_{e}\}$ and $\mathcal{U}^{(1)},\
\mathcal{V}^{(1)}$ are the exact left generalized singular subspaces
of $(A,B)$ associated with all the $c\in [c_{\min},c_{\max}]$.

\begin{algorithm}
    \caption{The basic FEAST GSVDsolver:
    Subspace iteration on the approximate spectral projector
        $P$ for computing the partial GSVD of $(A,B)$.}
    \label{alg:subspace iteration}
    \begin{algorithmic}[1]
        \REQUIRE{The interval $[c_{\min},c_{\max}]$, the approximate
        spectral projector
        $P$, a $p$-dimensional right subspace $\mathcal{X}^{(0)}$
        with $p\geq n_{gv}$ and the left subspaces $\mathcal{U}^{(0)}=A\mathcal{X}^{(0)}$
        and $\mathcal{V}^{(0)}=B\mathcal{X}^{(0)}$.}
        \ENSURE{$n_{gv}$ converged Ritz quintuples
        $(\hat{c}^{(k)},\hat{s}^{(k)},\hat{u}^{(k)},\hat{v}^{(k)},\hat{x}^{(k)})$.}
        \FOR{$k=0,1,\dots,$}
        \STATE{Projection: realize the method \eqref{pgsvd} with respect to
        $\mathcal{U}^{(k)}, \mathcal{V}^{(k)}$ and $\mathcal{X}^{(k)}$,
        and compute the Ritz approximations $(\hat{c}^{(k)},\hat{s}^{(k)},\hat{u}^{(k)},\hat{v}^{(k)},\hat{x}^{(k)})$.}
        \STATE{Compute the norms of residuals \eqref{res} of
        $(\hat{c}^{(k)},\hat{s}^{(k)},\hat{u}^{(k)},\hat{v}^{(k)},\hat{x}^{(k)})$
        for all $\hat{c}^{(k)}\in [c_{\min},c_{\max}]$.}
            \IF{converged}
                \STATE{\textbf{break}}
            \ENDIF
            \STATE{Update the right searching subspace $\mathcal{X}^{(k+1)}=P\mathcal{X}^{(k)}$ and the left searching subspaces $\mathcal{U}^{(k+1)}=A\mathcal{X}^{(k+1)},\mathcal{V}^{(k+1)}=B\mathcal{X}^{(k+1)}$.}
        \ENDFOR
    \end{algorithmic}
\end{algorithm}

\section{A detailed CJ-FEAST GSVDsolver and its
ingredients}\label{sec: CJ-FEAST GSVD}

\subsection{Approximate spectral projector}

Define the function $l(z)=2z^2-1$,
which maps $[0,1]$ to $[-1,1]$ and a given interval
$[c_{\min},c_{\max}]\subset [0,1]$ to $[2c_{\min}^2-1,2c_{\max}^2-1]\subset
[-1,1]$, the spectral interval of $S$.
Define the step function
\begin{equation}\label{hfun}
    h(l(z))=
    \begin{cases}
        1, \qquad z\in (c_{\min}, c_{\max}), \\
        \frac{1}{2}, \qquad z\in \{c_{\min}, c_{\max}\}, \\
        0, \qquad z\in [0,1]\setminus [c_{\min}, c_{\max}],
    \end{cases}
\end{equation}
which corresponds to the eigenvalues of the spectral projector $P_S$
defined by \eqref{ps}. Furthermore,
it follows from \eqref{sdecomp} and \eqref{ps} that the matrix
\begin{equation}\label{hps}
    h(S)=X h(2\Sigma_{A}^T\Sigma_{A}-I)X^TH =P_{S}.
\end{equation}
Thus by the
CJ series expansion $\psi_d(l(z))$ of $h(l(z))$
\cite{jay1999electronic,jia2022afeastsvdsolver,rivlin1981introduction},
we can construct an approximate spectral projector
\begin{equation}\label{peigen}
    P=\psi_{d}(S)=\sum_{j=0}^{d}\rho_{j,d}\eta_jT_{j}(S)
    =X \psi_{d}(2\Sigma_{A}^T\Sigma_{A}-I) X^{-1}.
\end{equation}
Notice that $P$ and $S$ share the same eigenvector matrix $X$,
and the eigenvalues of $P$ are $\gamma_i:=\psi_d(l(c_i)),\ i=1,2,\ldots,q$,
$q_1$ multiple $\gamma:=\psi_d(l(0))$ and $q_2$ multiple
$\gamma:=\psi_d(l(1))$.

Now we analyze the error $\|P_{S}-P\|$ and estimate the eigenvalues
of $P$.
\begin{theorem}\label{Thm:accuracyps}
    Given the interval $[c_{\min},c_{\max}]\subset [0,1]$, let
    \begin{align*}
         & \alpha=\arccos(l(c_{\min})), \quad   \beta=\arccos(l(c_{\max})),                                               \\
         & \Delta_{il}=|\arccos(l(c_{il}))-\alpha|, \quad \Delta_{ir}=|\arccos(l(c_{ir}))-\beta|, \\
         & \Delta_{ol}=|\arccos(l(c_{ol}))-\alpha|, \quad \Delta_{or}=|\arccos(l(c_{or}))-\beta|,
    \end{align*}
    where $c_{il},\ c_{ir}$ and
    $c_{ol},\ c_{or}$ are the diagonal elements of $\Sigma_{A}$ that
    are the closest to $c_{\min}$ and $c_{\max}$ inside and outside
    $[c_{\min},c_{\max}]$, respectively, and
    define
    \begin{equation}\label{deltamin}
        \Delta_{\min}={\min}\{\Delta_{il},\Delta_{ir},\Delta_{ol},\Delta_{or}\}.
    \end{equation}
    Then
    \begin{align}
         \|P_{S}-P\| \leq \frac{\pi^6\kappa}{2(d+2)^3\Delta_{\min}^{4}}, \label{Accuracy of projector}
    \end{align}
    where $\kappa=\kappa([A^T,B^T])$ is the 2-norm
    condition number of $[A^T,B^T]^T$.
    Suppose the diagonals of $\Sigma_{A}$ in $[c_{\min},c_{\max}]$ are
    $c_1,\ldots,c_{n_{gv}}$ with $c_1,\ldots,c_r$ in open interval
    $(c_{\min},c_{\max})$ and $c_{r+1},\ldots, c_{n_{gv}}$
    equal to $c_{\min}$ or $c_{\max}$
    and those in $[0,1]\setminus [c_{\min},c_{\max}]$
    are $c_{n_{gv}+1},\ldots,c_n$, and label
    $\gamma_i = \psi_d(l(c_i)),\ i=1,2,\ldots,r$, $i=r+1,\ldots,n_{gv}$ and $i=n_{gv}+1,\dots,n$
    in decreasing order, respectively. Then if
    \begin{equation}\label{dsize}
        d>\frac{\sqrt[3]{2}\pi^2}{\Delta_{\min}^{4/3}}-2,
    \end{equation}
    it holds that
    \begin{equation}\label{evhatp}
        1\geq \gamma_1  \geq \cdots \geq \gamma_r > \frac{3}{4}> \gamma_{r+1 }\geq \cdots \geq \gamma_{n_{gv}}
            >\frac{1}{4}>\gamma_{n_{gv}+1}\geq \cdots\geq \gamma_n \geq 0.
    \end{equation}
\end{theorem}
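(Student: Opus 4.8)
The plan is to reduce both claims to a single scalar estimate — the pointwise error of the Chebyshev--Jackson approximation $\psi_d$ to the step function $h$ — and then to lift it to the matrix norm through the $H$-orthogonality of $X$. First I would combine \eqref{hps} and \eqref{peigen} with $X^{-1}=X^TH$ to write
\[
 P_S-P=X\bigl(h(2\Sigma_A^T\Sigma_A-I)-\psi_d(2\Sigma_A^T\Sigma_A-I)\bigr)X^{-1}=XDX^{-1},
\]
where $D=\diag\{h(l(c_i))-\psi_d(l(c_i))\}$ is diagonal and $l(c_i)$ runs over the eigenvalues of $S$ (the values $\pm1$ from the trivial generalized singular values included). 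Because $X^THX=I_n$, the matrix $Q:=H^{1/2}X$ is orthogonal, whence $X=H^{-1/2}Q$, $X^{-1}=Q^TH^{1/2}$, and $P_S-P=H^{-1/2}(QDQ^T)H^{1/2}$. Taking $2$-norms and using $\|QDQ^T\|=\|D\|$ gives $\|P_S-P\|\le\|H^{-1/2}\|\,\|D\|\,\|H^{1/2}\|=\sqrt{\kappa(H)}\,\|D\|$. Since $H=A^TA+B^TB=[A^T,B^T][A^T,B^T]^T$ is the Gram matrix of $[A^T,B^T]^T$, one has $\kappa(H)=\kappa^2$, so the matrix bound collapses to $\|P_S-P\|\le\kappa\,\max_i|h(l(c_i))-\psi_d(l(c_i))|$.

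Next I would invoke the sharp pointwise Chebyshev--Jackson estimate, transplanted from the companion CJ-FEAST SVD analysis: realizing $\psi_d(\cos\cdot)$ as the convolution of the even $2\pi$-periodic extension of $h(\cos\cdot)$ with the nonnegative Jackson kernel in the variable $\theta=\arccos(\cdot)$, whose quartic numerator and cubic normalization produce decay of order $(d+2)^{-3}$ times the inverse fourth power of the $\theta$-distance to the jumps. The two jumps lie at $\theta=\alpha=\arccos(l(c_{\min}))$ and $\theta=\beta=\arccos(l(c_{\max}))$, so for an eigenvalue $c_i$ at $\theta$-distance $\delta_i$ from the nearer jump one obtains $|h(l(c_i))-\psi_d(l(c_i))|\le\frac{\pi^6}{2(d+2)^3\delta_i^4}$. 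The maximum over $i$ is attained at the eigenvalue closest in $\theta$ to a jump, i.e.\ one of $c_{il},c_{ir},c_{ol},c_{or}$ with distances $\Delta_{il},\Delta_{ir},\Delta_{ol},\Delta_{or}$; the eigenvalues sitting exactly on a jump contribute no larger error, since there the even kernel returns the average value $\frac12$ up to a higher-order tail from the opposite jump, which is why they are excluded from $\Delta_{\min}$. Hence $\|D\|\le\frac{\pi^6}{2(d+2)^3\Delta_{\min}^4}$, and combining with the first step yields \eqref{Accuracy of projector}.

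For the spectral claim \eqref{evhatp} I would use two properties of $\psi_d$ on $[-1,1]$: the bound $0\le\psi_d\le1$, which follows from $\psi_d$ being the convolution of the $[0,1]$-valued $h$ with a nonnegative kernel of unit mass, together with the pointwise estimate above. The first property yields the outer inequalities $1\ge\gamma_1$ and $\gamma_n\ge0$. For the separation, note that hypothesis \eqref{dsize} is exactly $(d+2)^3>2\pi^6/\Delta_{\min}^4$, i.e.\ $\frac{\pi^6}{2(d+2)^3\Delta_{\min}^4}<\frac14$. Every $c_i\in(c_{\min},c_{\max})$ has $\theta$-distance $\ge\Delta_{\min}$ to the nearer jump and $h(l(c_i))=1$, hence $\gamma_i>\frac34$; every $c_i\notin[c_{\min},c_{\max}]$ has $h(l(c_i))=0$, hence $\gamma_i<\frac14$; and each boundary $c_i\in\{c_{\min},c_{\max}\}$ maps to a $\gamma_i$ within $\frac14$ of $\frac12$ by the averaging property, hence $\frac14<\gamma_i<\frac34$. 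Sorting the three groups in decreasing order gives \eqref{evhatp}.

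The main obstacle is the scalar pointwise estimate itself — establishing the $(d+2)^{-3}\delta^{-4}$ decay with the explicit constant $\pi^6/2$ and controlling the value exactly at the jumps. This is where the Jackson-kernel machinery of the companion SVD paper must be carried over, the only genuinely new bookkeeping being that the jump abscissas are now $\arccos(l(c_{\min}))$ and $\arccos(l(c_{\max}))$ induced by the map $l(z)=2z^2-1$; by contrast, the matrix-to-scalar reduction through $H^{1/2}X$ and the separation argument under \eqref{dsize} are routine.
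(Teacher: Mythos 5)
Your proposal is correct and follows essentially the same route as the paper: diagonalize $P_S-P$ via $X$, bound the diagonal error by the pointwise Chebyshev--Jackson estimate $\pi^6/(2(d+2)^3\Delta_{\min}^4)$ imported from the companion SVD paper, multiply by $\kappa(X)=\kappa$, and obtain \eqref{evhatp} from $\psi_d\in[0,1]$ together with the observation that \eqref{dsize} forces the pointwise error below $\tfrac14$. The only (harmless) difference is that you derive $\kappa(X)=\sqrt{\kappa(H)}=\kappa$ directly from the orthogonality of $H^{1/2}X$, whereas the paper cites this identity from Hansen's GSVD perturbation result.
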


\begin{proof}
    Note that the eigenvalues of
    $P_{S}$ are
    \begin{equation*}
        h(l(c_{i}))=
        \begin{cases}
            1, \quad 1\leq i \leq r,                      \\
            \frac{1}{2}, \quad r+1\leq i\leq n_{gv}, \\
            0, \quad n_{gv}+1\leq i \leq n.
        \end{cases}
    \end{equation*}
    Then we obtain
    \begin{align}
        \|X^{-1}(P_{S}-P)X\|   & =\|X^{-1}(h(S)-\psi_d(S))X \|   \notag \\
                            & =\|h(2\Sigma_{A}^T\Sigma_{A}-I)-\psi_d(2\Sigma_{A}^T\Sigma_{A}-I) \|  \notag\\
                            & =\max_{i=1,2,\ldots,n}|h(l(c_i))-\psi_d(l(c_i))|                            \notag\\
                            & =\max_{i=1,2,\ldots,n}|h(\cos(\theta_i))-\psi_d(\cos(\theta_i))|                        ,\label{ph}
    \end{align}
    where $\theta_i=\arccos(l(c_i))$. Note that
    \begin{equation*}
        \Delta_{\min} \leq {\min}\{2\pi-2\alpha,\alpha-\beta,2\beta\}.
    \end{equation*}
    It then follows from \eqref{ph} and
    the pointwise convergence results on $\psi_d(l(z))$
    \cite[Theorem 3.2, Theorem 3.3]{jia2022afeastsvdsolver} that
    \begin{equation}\label{Accuracy of diag}
        \|X^{-1}(P_{S}-P)X\|
        =\|h(2\Sigma_{A}^T\Sigma_{A}-I)-\psi_d(2\Sigma_{A}^T\Sigma_{A}-I) \| \leq \frac{\pi^6}{2(d+2)^3\Delta_{\min}^{4}}.
    \end{equation}
    From \cite[Theorem 2.3]{Hansen1989regularization} we have
    \begin{equation}\label{kappax}
        \kappa(X)=\kappa.
    \end{equation}
    Therefore,
    \begin{equation*}
        \|P_{S}-P\| \leq \kappa(X)\|X^{-1}(P_{S}-P)X\|\leq
        \frac{\pi^4\kappa}{(2d+4)^2\Delta_{\min}^2},
    \end{equation*}
    which proves \eqref{Accuracy of projector} and indicates that
    $P=\psi_{d}(S)$ converges to $P_{S}$ as $d$ increases.

    Since \cite[Theorem 3.1]{jia2022afeastsvdsolver} proves that $\gamma_i,\,i=1,2,\ldots,n$ are in $[0,1]$,
    we have
    \begin{equation}\label{hrelation}
        \|h(2\Sigma_{A}^T\Sigma_{A}-I)-\psi_d(2\Sigma_{A}^T\Sigma_{A}-I)\|=
        \max\biggl\{1-\gamma_r,
        \max_{r+1\leq i\leq n_{gv}}\biggl|\frac{1}{2}-\gamma_i\biggr|, \gamma_{n_{gv}+1}\biggr\}.
    \end{equation}
    Under condition \eqref{dsize}, it is easily justified from
    \eqref{Accuracy of diag} that
    \begin{equation}\label{14}
    \|h(2\Sigma_{A}^T\Sigma_{A}-I)-\psi_d(2\Sigma_{A}^T\Sigma_{A}-I)\|<\frac{1}{4}.
    \end{equation}
    Then by \eqref{hrelation} and
    the labeling order of $\gamma_i, i=1,2,\ldots,n$ we have \eqref{evhatp}.
\end{proof}

\begin{remark}\label{rem:error}
    As $d$ increases, $\gamma_i\approx 1,\ i=1,2,\ldots,r$,
    $\gamma_i\approx \frac{1}{2},\ i=r+1,\ldots, n_{gv}$,
    and $\gamma_i\approx 0,\ i = n_{gv}+1,\ldots,n$.
    In fact, by \eqref{Accuracy of diag}, we can make $\|h(2\Sigma_{A}^T\Sigma_{A}-I)-\psi_d(2\Sigma_{A}^T\Sigma_{A}-I)\|<\epsilon$ with
    $\epsilon$ arbitrarily small by increasing $d$. In this case, we have
    \begin{align*}
        1-\epsilon<&\gamma_i\leq 1, i = 1,2,\dots,r,\\
        \frac{1}{2}-\epsilon<&\gamma_i<\frac{1}{2}+\epsilon, i = r+1,\dots,n_{gv}, \\
        0\leq&\gamma_i<\epsilon,\ i = n_{gv}+1,\ldots,n.
    \end{align*}
\end{remark}

\begin{remark}
    $\gamma_i,i=1,2,\ldots,n_{gv}$ are the first $n_{gv}$ dominant eigenvalues
    of $P$, and ${\mathrm{span}}\{X_{in},X_{e}\}$ is the corresponding
    dominant eigenspace,
    provided that the series degree $d$ is sufficiently big such that
    \eqref{14} holds. Particularly,
    if none of $2c_{\min}^2-1$ and $2c_{\max}^2-1$ is an eigenvalue of $S$,
    then the first dominant eigenvalues $\gamma_1,\ldots,\gamma_{n_{gv}}$ of $P$ correspond to the desired generalized singular values of $(A,B)$,
    provided that
    $$
    \|h(2\Sigma_{A}^T\Sigma_{A}-I)-\psi_d(2\Sigma_{A}^T\Sigma_{A}-I)\|<\frac{1}{2}.
    $$
In either case, given an initial subspace of dimension $n_{gv}$
    that no vector in it is $H$-orthogonal to ${\mathrm{span}}\{X_{in},X_{e}\}$,,
    the sequence of subspaces generated by applying subspace iteration to $P$
    converges to ${\mathrm{span}}\{X_{in},X_{e}\}$.
    Therefore, the CJ-FEAST GSVDsolver should compute the desired $n_{gv}$ GSVD
    components of $(A,B)$ successfully.
\end{remark}

\begin{remark}
    The convergence rate $1/(d+2)^3$ in bounds
    \eqref{Accuracy of projector} and \eqref{Accuracy of diag} is the sharpest possible and cannot be improved,
    as has been numerically illustrated in \cite{jia2022afeastsvdsolver}.
\end{remark}

\subsection{Determination of the subspace dimension}

Note that the trace
${\mathrm{tr}}(P_S)=r+\frac{n_{gv}-r}{2}=\frac{r+n_{gv}}{2}$,
which equals $n_{gv}$ when none of $2c_{\min}^2-1$ and $2c_{\max}^2-1$
is an eigenvalue of $S$. A good estimate for ${\mathrm{tr}}(P)$
enables us to reliably choose the subspace dimension $p$ to ensure
$p\geq n_{gv}$, which is necessary for \Cref{alg:subspace iteration}.

The following lemma \cite{avron2011randomized,Cortinovis2021onrandom}
is on estimates for the trace of a symmetric matrix based on the Monte--Carlo
simulation.

\begin{lemma}\label{lem:stochastic estimation}
    Let $G$ be an $n\times n$ real symmetric matrix, and
    $H_{M}=\frac{1}{M}\sum_{i=1}^{M}z_i^T G z_i$,
    where the components $z_{ij}$ of the random vectors $z_i$ are independent
    and identically distributed Rademacher random variables, i.e.,
    ${\mathrm{Pr}}(z_{ij}=1)={\mathrm{Pr}}(z_{ij}=-1)=\frac{1}{2}$.
    Then the expectation ${\mathrm{E}}(H_M)={\mathrm{tr}}(G)$, variance
    ${\mathrm{Var}}(H_M)=\frac{2}{M}\bigl(\|G\|_F^2-\sum_{i=1}^{n}G_{ii}^2\bigr)$,
    and probability
    ${\mathrm{Pr}}(|H_M-{\mathrm{tr}}(G)|\geq \epsilon )\leq \delta$ for
    $ M \geq 8\epsilon^{-2}(\|G\|_F^2+2\epsilon \|G\|)\ln(\frac{2}{\delta})$
    with $\delta<1$,
    where $\|\cdot\|_F$ denotes the Frobenius norm of a matrix.
\end{lemma}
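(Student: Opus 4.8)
The plan is to treat the three assertions separately: the expectation and variance follow from direct moment computations for Rademacher variables, while the probability bound is a concentration argument. For the expectation, writing $z_i^T G z_i = \sum_{j,k} z_{ij} G_{jk} z_{ik}$ and using ${\mathrm{E}}(z_{ij}) = 0$, $z_{ij}^2 = 1$, together with the independence of distinct components so that ${\mathrm{E}}(z_{ij} z_{ik}) = \delta_{jk}$, the off-diagonal terms vanish in expectation and ${\mathrm{E}}(z_i^T G z_i) = \sum_j G_{jj} = {\mathrm{tr}}(G)$; linearity then gives ${\mathrm{E}}(H_M) = {\mathrm{tr}}(G)$.

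For the variance, independence of the $z_i$ reduces the problem to a single sample, ${\mathrm{Var}}(H_M) = \frac{1}{M}{\mathrm{Var}}(z_1^T G z_1)$. I would then compute ${\mathrm{E}}[(z^T G z)^2] = \sum_{j,k,l,m} G_{jk} G_{lm}\,{\mathrm{E}}(z_j z_k z_l z_m)$, using that for Rademacher variables the fourth moment ${\mathrm{E}}(z_j z_k z_l z_m)$ equals $1$ exactly when the indices pair up evenly and vanishes otherwise. Collecting the surviving patterns (all four indices equal; two disjoint diagonal pairs; and the two off-diagonal pairings, which coincide by the symmetry $G_{kj} = G_{jk}$) and subtracting $({\mathrm{tr}}(G))^2 = \sum_j G_{jj}^2 + \sum_{j \neq l} G_{jj} G_{ll}$ leaves ${\mathrm{Var}}(z^T G z) = 2\sum_{j \neq k} G_{jk}^2 = 2\bigl(\|G\|_F^2 - \sum_j G_{jj}^2\bigr)$, which yields the stated variance.

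The probability bound is the substantive part. The idea is to control the moment-generating function of the centered quadratic form $z^T G z - {\mathrm{tr}}(G)$ for a single sample, showing it is sub-exponential with a variance proxy governed by $\|G\|_F^2$ and a tail parameter governed by the spectral norm $\|G\|$; this is the content of the Hanson--Wright inequality for Rademacher vectors. Diagonalizing $G = Q\Lambda Q^T$ and separating the diagonal (square) contributions from the decoupled off-diagonal ones yields an estimate of the form ${\mathrm{E}}[\exp(\lambda(z^T G z - {\mathrm{tr}}(G)))] \leq \exp(C\|G\|_F^2\lambda^2)$ valid for $|\lambda|$ below a threshold of order $\|G\|^{-1}$. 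Multiplying the per-sample bounds over the $M$ independent draws and optimizing a Chernoff bound over $\lambda$ produces a Bernstein-type two-sided tail; matching the explicit absolute constants then gives the stated sufficient sample size $M \geq 8\epsilon^{-2}(\|G\|_F^2 + 2\epsilon\|G\|)\ln(\frac{2}{\delta})$. The main obstacle is tracking these constants through the MGF estimate for an \emph{indefinite} $G$, where the diagonal and off-diagonal parts must be handled by different tail regimes; the cleanest route is to invoke the explicit bound established in \cite{avron2011randomized,Cortinovis2021onrandom} rather than re-deriving it.
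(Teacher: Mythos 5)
The paper gives no proof of this lemma at all --- it is imported verbatim from the cited references \cite{avron2011randomized,Cortinovis2021onrandom} --- so there is no internal argument to compare against. Your direct moment computations for the expectation and the variance are correct (including the cancellation that works out exactly because $z_{ij}^2=1$ deterministically for Rademacher variables), and deferring to the Hanson--Wright/Bernstein-type analysis of the cited works for the tail bound with its explicit constants is precisely what the paper itself does.
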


In our context, unlike that for the SVD or standard symmetric (Hermitian)
eigenvalue problem, $P_{S}$ defined by \eqref{ps} and its approximation
$P$ constructed by the CJ series expansion are {\em no longer} symmetric.
To estimate the trace of a real unsymmetric matrix $P$,
we can estimate ${\mathrm{tr}}(P+P^T)/2={\mathrm{tr}}(P)$, as suggested
in \cite{Cortinovis2021onrandom}.
As is shown in \cite[Theorem 3.1]{jia2022afeastsvdsolver},
the eigenvalues $\gamma_i,\,i=1,2,\ldots,n$ of our $P$ are unconditionally
{\em nonnegative} and lie in $[0,1]$, meaning that ${\mathrm{tr}}(P)>0$.
We can prove the following theorem,
which establishes a {\em relative} error estimate for ${\mathrm{tr}}(P)$.

\begin{theorem}\label{Thm:stochastic estimation}
    For ${\mathrm{tr}}(P)>0$, ${\mathrm{Pr}}(|H_M-{\mathrm{tr}}(P)|\geq
    \epsilon \ {\mathrm{tr}}(P))\leq \delta$ when
    $ M \geq 8 \epsilon^{-2}(\|P\|_F^2+2\|P\| \epsilon \ {\mathrm{tr}}(P))
    \ln(\frac{2}{\delta})/{\mathrm{tr}}(P)^2$.
\end{theorem}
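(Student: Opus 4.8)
The plan is to reduce the claim to the symmetric case already established in \Cref{lem:stochastic estimation}. The key observation is that although $P$ is unsymmetric, the Hutchinson quadratic form is insensitive to this: for each Rademacher vector $z_i$, the scalar $z_i^T P z_i$ equals its own transpose $z_i^T P^T z_i$, so that $z_i^T P z_i = z_i^T G z_i$ with the symmetric matrix $G=\frac{1}{2}(P+P^T)$. Consequently the estimator $H_M=\frac{1}{M}\sum_{i=1}^M z_i^T P z_i$ coincides identically with the Hutchinson estimator of ${\mathrm{tr}}(G)$, and ${\mathrm{E}}(H_M)={\mathrm{tr}}(G)={\mathrm{tr}}(P)$ since the trace is linear and invariant under transposition. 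This symmetrization identity is precisely what allows the symmetric-matrix lemma to be invoked for the unsymmetric $P$.

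First I would apply \Cref{lem:stochastic estimation} to the symmetric matrix $G$. This yields, for any absolute tolerance $\epsilon'>0$, the bound ${\mathrm{Pr}}(|H_M-{\mathrm{tr}}(P)|\geq \epsilon')\leq\delta$ whenever $M\geq 8(\epsilon')^{-2}(\|G\|_F^2+2\epsilon'\|G\|)\ln(\frac{2}{\delta})$ with $\delta<1$. To pass from an absolute to the desired relative error, I would set $\epsilon'=\epsilon\,{\mathrm{tr}}(P)$, which is legitimate because ${\mathrm{tr}}(P)>0$. Substituting this choice turns the event into $|H_M-{\mathrm{tr}}(P)|\geq \epsilon\,{\mathrm{tr}}(P)$ and turns the sample-size condition into $M\geq 8\epsilon^{-2}{\mathrm{tr}}(P)^{-2}(\|G\|_F^2+2\epsilon\,{\mathrm{tr}}(P)\|G\|)\ln(\frac{2}{\delta})$.

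The final step is to replace the norms of $G$ by those of $P$, which are the quantities appearing in the stated threshold. Since both the spectral norm and the Frobenius norm are invariant under transposition, subadditivity gives $\|G\|=\frac{1}{2}\|P+P^T\|\leq \|P\|$ and $\|G\|_F=\frac{1}{2}\|P+P^T\|_F\leq \|P\|_F$. The right-hand side of the sample-size condition is monotonically increasing in both $\|G\|$ and $\|G\|_F$ (all coefficients are positive since $\epsilon>0$, ${\mathrm{tr}}(P)>0$ and $\ln(\frac{2}{\delta})>0$), so these two inequalities show that the threshold with $\|G\|,\|G\|_F$ is dominated by the threshold with $\|P\|,\|P\|_F$. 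Hence the hypothesis $M\geq 8\epsilon^{-2}(\|P\|_F^2+2\|P\|\epsilon\,{\mathrm{tr}}(P))\ln(\frac{2}{\delta})/{\mathrm{tr}}(P)^2$ implies the condition required by \Cref{lem:stochastic estimation} applied to $G$, and the claimed probability bound follows. There is no genuine obstacle in this argument; the only point needing care is the symmetrization identity $z_i^T P z_i=z_i^T G z_i$ together with keeping the direction of the two norm inequalities consistent with the monotonicity of the threshold.
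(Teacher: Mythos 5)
Your proof is correct and follows essentially the same route as the paper: apply \Cref{lem:stochastic estimation} to the symmetrized matrix $\frac{1}{2}(P+P^T)$ with the absolute tolerance $\epsilon\,{\mathrm{tr}}(P)$, then bound $\|\frac{1}{2}(P+P^T)\|_F\leq\|P\|_F$ and $\|\frac{1}{2}(P+P^T)\|\leq\|P\|$. Your explicit remark that $z_i^TPz_i=z_i^TGz_i$, which justifies why the Hutchinson estimator for the unsymmetric $P$ coincides with that for its symmetric part, is a small but welcome clarification that the paper leaves implicit.
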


\begin{proof}
   Since ${\mathrm{tr}}(P)>0$ and ${\mathrm{tr}}(P+P^T)/2={\mathrm{tr}}(P)$,
for a given $\epsilon$,   we replace $G$ by $(P+P^T)/2$ and $\epsilon$ by
$\epsilon \ {\mathrm{tr}}(P)$
    in \cref{lem:stochastic estimation}, so that
    \begin{equation*}
        {\mathrm{Pr}}(|H_M-{\mathrm{tr}}(P)|\geq \epsilon \
        {\mathrm{tr}}(P))\leq \delta
    \end{equation*}
holds when
    \begin{equation*}
        M \geq 8\biggl(\bigl\|(P+P^T)/2\bigr\|_F^2(\epsilon \
        {\mathrm{tr}}(P))^{-2}+
        \bigl\|P+P^T\bigr\| (\epsilon \
        {\mathrm{tr}}(P))^{-1}\biggr)\ln(\frac{2}{\delta}).
    \end{equation*}
    It then follows from
    \begin{align*}
        &\frac{1}{2}\|P+P^T\|_F \leq \frac{1}{2}(\|P\|_F + \|P^T\|_F)
        = \|P\|_F
    \end{align*}
    and
    \begin{align*}
        &\frac{1}{2}\|P+P^T\| \leq  \|P\|
    \end{align*}
    that the assertion holds.
\end{proof}

\begin{theorem}
Let $\Delta_{\min}$ be defined as \eqref{deltamin}. Then the trace
${\mathrm{tr}}(P)$ satisfies
    \begin{equation}\label{estp}
       |{\mathrm{tr}}(P_{S})-{\mathrm{tr}}(P)|\leq
       \frac{n\pi^6}{2(d+2)^3\Delta_{\min}^{4}}.
    \end{equation}
\end{theorem}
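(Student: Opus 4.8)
The plan is to reduce the trace-difference bound to the already-established pointwise accuracy estimate from \Cref{Thm:accuracyps}, exploiting the shared eigenvector structure of $P_S$ and $P$. Both matrices are diagonalized by the same $X$ (see \eqref{hps} and \eqref{peigen}), so $P_S - P = X\,\big(h(2\Sigma_A^T\Sigma_A - I) - \psi_d(2\Sigma_A^T\Sigma_A - I)\big)\,X^{-1}$. Since trace is similarity-invariant, I would write
\begin{equation*}
    {\mathrm{tr}}(P_S) - {\mathrm{tr}}(P)
    = {\mathrm{tr}}\big(h(2\Sigma_A^T\Sigma_A - I) - \psi_d(2\Sigma_A^T\Sigma_A - I)\big)
    = \sum_{i=1}^{n}\big(h(l(c_i)) - \gamma_i\big),
\end{equation*}
which converts the problem from matrix norms into a sum of $n$ scalar pointwise errors.

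First I would bound the sum by the triangle inequality:
\begin{equation*}
    |{\mathrm{tr}}(P_S) - {\mathrm{tr}}(P)| \leq \sum_{i=1}^{n} |h(l(c_i)) - \gamma_i|
    \leq n \max_{1\leq i\leq n}|h(l(c_i)) - \gamma_i|.
\end{equation*}
The maximum here is exactly the quantity controlled in \eqref{Accuracy of diag} of \Cref{Thm:accuracyps}, namely the diagonal (eigenvalue) accuracy
$\|h(2\Sigma_A^T\Sigma_A - I) - \psi_d(2\Sigma_A^T\Sigma_A - I)\| \leq \frac{\pi^6}{2(d+2)^3\Delta_{\min}^4}$. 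Combining these two steps yields the claimed bound \eqref{estp} directly, with the factor $n$ arising simply from summing $n$ scalar terms.

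The key observation making this work is that, unlike the norm bound \eqref{Accuracy of projector} for $\|P_S - P\|$ which incurs the conditioning factor $\kappa = \kappa(X)$ through similarity, the trace is \emph{invariant} under the similarity transformation by $X$. Hence no $\kappa$ appears in \eqref{estp}: the trace difference depends only on the intrinsic eigenvalue errors and the count $n$. I would emphasize this point, since it is the genuinely informative feature of the statement and explains why the bound \eqref{estp} is free of the potentially large $\kappa$.

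I do not anticipate a serious obstacle here; the proof is essentially a two-line computation once the similarity-invariance of the trace is invoked. The only minor care needed is to confirm that the eigenvalues of $P$ are precisely the $\gamma_i = \psi_d(l(c_i))$ with the correct multiplicities (including the $q_1$ copies of $\psi_d(l(0))$ and $q_2$ copies of $\psi_d(l(1))$), which follows from \eqref{peigen}, so that the diagonal entries of $\psi_d(2\Sigma_A^T\Sigma_A - I)$ genuinely account for all $n$ eigenvalues of $P$ and the scalar bound of \Cref{Thm:accuracyps} applies term-by-term.
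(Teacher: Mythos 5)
Your proof is correct and is essentially identical to the paper's: both pass from the trace difference to the sum of the $n$ scalar eigenvalue errors $h(l(c_i))-\gamma_i$ (justified by the shared eigenvector matrix $X$ and the similarity-invariance of the trace), bound it by $n$ times the maximum, and invoke the pointwise estimate \eqref{Accuracy of diag}. Your added remark that the trace bound avoids the factor $\kappa(X)$ appearing in \eqref{Accuracy of projector} is a correct and worthwhile observation, but the argument itself matches the paper's.
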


\begin{proof}
    By \eqref{hfun}--\eqref{peigen}, we have
    \begin{align*}
        |{\mathrm{tr}}(P_{S})- {\mathrm{tr}}(P)| &=\left|\sum_{i=1}^n(h(l(c_i))-\gamma_i)\right| \\
                &\leq \sum_{i=1}^n|h(l(c_i))-\gamma_i| \\
                &\leq n \max_{i=1,2,\ldots,n}|h(l(c_i))-\gamma_i|\\
                &= n\|h(2\Sigma_{A}^T\Sigma_{A}-I)-\psi_d(2\Sigma_{A}^T\Sigma_{A}-I) \|,
    \end{align*}
    which, from \eqref{Accuracy of diag}, proves \eqref{estp}.
\end{proof}

The error estimate \eqref{estp} is similar to that for the CJ-FEAST SVDsolver
in \cite{jia2022afeastsvdsolver}, and the same analysis
is directly applicable to \eqref{estp}, which states that
the large factor $n$ essentially behaves like a
constant $\mathcal{O}(1)$.

Now we analyze the sample number $M$ needed for reliably
estimating ${\mathrm{tr}}(P)$.
It is straightforward from \eqref{peigen}, \eqref{evhatp} and
\eqref{kappax} that
\begin{equation*}
    \|P\| \leq \kappa(X)=\kappa.
\end{equation*}
Since the Frobenius norm and the 2-norm are consistent \cite[p.29]{stewart2001matrix},
by \eqref{evhatp} we have
\begin{equation*}
    \|P\|_F \leq \|X\| \|X^{-1}\| \|\psi_d(2\Sigma_A^T\Sigma_A-I)\|_F \approx \sqrt{n_{gv}} \kappa.
\end{equation*}
We remark that if $P$ is replaced by $P_S$ then $\|P_S\|_F\leq \sqrt{n_{gv}}
\kappa$ rigorously.
Substitute the above relation into the condition on $M$ in
\Cref{Thm:stochastic estimation}. Then the condition on $M$ approximately
becomes
\begin{equation}\label{samplenumber}
    M \geq  8\epsilon^{-2} \frac{\kappa
    (\kappa+2\epsilon)}{n_{gv}}\ln\bigl(\frac{2}{\delta}\bigr).
\end{equation}
Notice that a reasonably small $\epsilon \in [0.01,0.1]$ enables
$H_M$ to be a reliable estimate of
${\mathrm{tr}}(P)$ with the probability $1-\delta
\approx 1$ for $\delta \sim 10^{-2}$. Suppose the size of $\kappa(X)$ is
modest. Then \eqref{samplenumber} shows that a modestly sized
$M$, say $20\sim 30$, can give a
reliable estimate; the bigger $n_{gv}$ is, the smaller
$M$ is required. As a result, once $P$ is a reasonably
good approximation to $P_S$, a modest $M$ suffices to
estimate ${\mathrm{tr}}(P_S)$.

In applications, in order to guarantee the subspace dimension $p\geq n_{gv}$,
we adopt the strategy proposed in \cite{jia2022afeastsvdsolver}, and choose
\begin{equation}\label{pchoice}
    p=\lceil \mu H_M\rceil, \ \mu \geq 1.1, \ M \in [20,30],
\end{equation}
where $\lceil \cdot \rceil$ is the ceil function.
We describe the procedure as \Cref{alg:subspace dimension},
and refer to \cite{jia2022afeastsvdsolver} for details on Step 1 in it.

\begin{algorithm}
    \caption{Determination of the subspace dimension $p$}
    \label{alg:subspace dimension}
    \begin{algorithmic}[1]
        \REQUIRE{The matrix pair $(A,B)$, the interval $[c_{\min},c_{\max}]$,
        the series degree $d$, and $M$ Rademacher random $n$-vectors
        $z_1,z_2,\dots,z_M$.}
        \ENSURE{$p$.}
        \STATE{Compute the CJ coefficients $\eta_j$ and $\rho_{j,d}$,
        $j=0,1,\ldots,d$.}
        \STATE{Compute $H_{M}=\frac{1}{M}\sum_{i=1}^{M}z_i^T P z_i
        =\frac{1}{M}\sum_{i=1}^{M}\sum_{j=0}^d \rho_{j,d}\eta_j z_i^T
        T_j(S)z_i$.}
        \STATE{Choose $p$ in terms of \eqref{pchoice}}.
    \end{algorithmic}
\end{algorithm}

\subsection{The CJ-FEAST GSVDsolver}
Having determined the approximate spectral projector $P$ by \eqref{peigen}
and the subspace dimension $p$ by \Cref{alg:subspace dimension}, for a given
right subspace $\mathcal{X}^{(0)}$ of dimension $p$, the application
\Cref{alg:subspace iteration} to $P$ generates
a sequence of approximate eigenspaces $\mathcal{X}^{(k)}$ of $P$
associated with its $p$ dominant eigenvalues
$\gamma_i,\ i=1,2,\ldots,p$, and form the left subspaces $\mathcal{U}^{(k)}=A\mathcal{X}^{(k)},
\mathcal{V}^{(k)}=B\mathcal{X}^{(k)}$. At iteration $k$, we
project $(A,B)$ onto the three subspaces to compute the Ritz approximations
$(\hat{c}^{(k)},\hat{s}^{(k)},\hat{u}^{(k)},\hat{v}^{(k)},\hat{x}^{(k)})$.

We now describe some implementation details. For $\mathcal{X}^{(0)}$ spanned by the columns
of $X^{(0)}\in\mathbb{R}^{n\times p}$ with full column rank,
which is generated randomly in a normal distribution.
Then at iteration $k=0,1,\dots$, 
compute the thin QR factorizations
\begin{equation}\label{qr}
    X^{(k)}=Q^{(k)}R^{(k)},\quad AQ^{(k)}=Q_1^{(k)}\bar{A}^{(k)}, \quad BQ^{(k)}=Q_2^{(k)}\bar{B}^{(k)},
\end{equation}
where $R^{(k)},\bar{A}^{(k)},\bar{B}^{(k)}\in \mathbb{R}^{p\times p}$ are upper triangular.
Then the columns of $Q^{(k)}$ form an orthonormal basis of $\mathcal{X}^{(k)}$.
Suppose that $AQ^{(k)}$ and $BQ^{(k)}$ are of full column rank.
Then the columns of $Q_1^{(k)}$ and $Q_2^{(k)}$ form orthonormal bases of
$\mathcal{U}^{(k)}$ and $\mathcal{V}^{(k)}$, and
\begin{equation}\label{proj}
    (Q_1^{(k)})^TAQ^{(k)}=\bar{A}^{(k)}, \quad (Q_2^{(k)})^TBQ^{(k)}=\bar{B}^{(k)}
\end{equation}
form the projection matrix pair $(\bar{A}^{(k)},\bar{B}^{(k)})$.
We compute the approximate GSVD component matrices $(\hat{\Sigma}_{A}^{(k)},
\hat{\Sigma}_{B}^{(k)}, \hat{U}^{(k)}, \hat{V}^{(k)}, \hat{X}^{(k)})$.
If not converged, generate the right subspace
$\mathcal{X}^{(k+1)}={\mathrm{span}}\{X^{(k+1)}=P\hat{X}^{(k)}\}$.
We describe the procedure as \Cref{alg:PGSVD},
which is called the cross-product based CJ-FEAST GSVDsolver as we make use of the approximation $P$ of
the spectral projector $P_S$ of the cross-product matrix pair $(A^TA-B^TB,A^TA+B^TB)$.

\begin{algorithm}
    \caption{The cross-product based CJ-FEAST GSVDsolver}
    \label{alg:PGSVD}
    \begin{algorithmic}[1]
        \REQUIRE{The matrix pair $(A, B)$, the interval $[c_{\min}, c_{\max}]$,
        the series degree $d$, a full column rank matrix $X^{(0)}\in \mathbb{R}^{n\times p}$ with $p\geq n_{gv}$. }
        \ENSURE{The $n_{gv}$ converged Ritz approximations
        $(\hat{c}_i^{(k)},\hat{s}_i^{(k)},\hat{u}_i^{(k)},\hat{v}_i^{(k)},\hat{x}_i^{(k)})$
        with $\hat{c}_i^{(k)}\in [c_{\min},
        c_{\max}]$.}
        \STATE{Compute the CJ coefficients $\eta_j$ and $\rho_{j,d}$, $j=0,1,\ldots,d$.}
        \FOR{$k=0,1,\dots,$}
            \STATE{Compute the QR factorizations \eqref{qr}.}
            \STATE{Compute the GSVD of the projection matrix pairs $(\bar{A}^{(k)},\bar{B}^{(k)})$:
            \begin{equation*}
                \bar{A}^{(k)}=\bar{U}^{(k)}\hat\Sigma_{A}^{(k)}(\bar{X}^{(k)})^{-1},\
                \bar{B}^{(k)}=\bar{V}^{(k)}\hat\Sigma_{B}^{(k)}(\bar{X}^{(k)})^{-1}
            \end{equation*}
            with
            $\hat{\Sigma}_{A}^{(k)}=\diag(\hat{c}_1^{(k)},\ldots,\hat{c}_p^{(k)})$
            and
            $\hat{\Sigma}_{B}^{(k)}=\diag(\hat{s}_1^{(k)},\ldots,\hat{s}_p^{(k)})$.}
            \STATE{Form the right and left Ritz approximation matrices:
            \begin{equation*}
                \hat{X}^{(k)}=Q^{(k)}\bar{X}^{(k)},\ \hat{U}^{(k)}=Q_{1}^{(k)}\bar{U}^{(k)},\
                \hat{V}^{(k)}=Q_{2}^{(k)}\bar{V}^{(k)},
            \end{equation*}
            where $\hat{X}^{(k)}$ is $H$-orthonormal and $\hat{U}^{(k)}$
            and $\hat{V}^{(k)}$ are orthonormal.}
            \STATE{Select those $\hat{c}_i^{(k)}\in [c_{\min}, c_{\max}]$, and compute the norms of residuals \eqref{res} of Ritz approximations $(\hat{c}_i^{(k)},\hat{s}_i^{(k)},
            \hat{u}_i^{(k)},\hat{v}_i^{(k)},\hat{x}_i^{(k)})$ with
            $\hat{u}_i^{(k)}=\hat{U}^{(k)}e_i,\hat{v}_i^{(k)}
            =\hat{V}^{(k)}e_i,\hat{x}_i^{(k)}=\hat{X}^{(k)}e_i$,
            where $e_i$ is the $i$-th column of identity matrix $I_p$.}
            \IF{converged}
                \STATE{\textbf{break}}
            \ENDIF
            \STATE{Compute
            $X^{(k+1)}=P\hat{X}^{(k)}=\sum_{j=0}^d \rho_{j,d}\eta_j T_j(S)\hat{X}^{(k)}$.}
        \ENDFOR
    \end{algorithmic}
\end{algorithm}

\subsection{More implementation details}\label{subsec: implementation}

We do not need to explicitly form the dense approximate spectral projector
$P$, which is exploited only implicitly in \Cref{alg:subspace dimension} and
\Cref{alg:PGSVD}. The most consuming part of \Cref{alg:PGSVD}
is to form matrix-vector (matrix) products with $P$. Next we analyze the
costs of $P$ in \Cref{alg:subspace dimension} and \Cref{alg:PGSVD}.

Recall the expression \eqref{peigen} of $P$. For a given vector $z$,
exploiting the three-term recurrence of Chebyshev polynomials, we have
\begin{equation*}
    T_{0}(S)z=z,\quad  T_{1}(S)z=Sz, \quad
    T_{j+1}(S)z=2 ST_{j}(S)z-T_{j-1}(S)z.
\end{equation*}
We compute $T_{j}(S)z, j=1,2,\dots,d$ recursively, and form $Pz$.

For the CJ series of degree $d$, it is seen from
$S=(A^TA+B^TB)^{-1}(A^TA-B^TB)$ that forming $Pz$ needs
to compute the $d$ matrix-vector products $Sq_i,\ i=1,2,\ldots,d$ for certain
$q_i$, which amounts to the solutions of the $d$ linear systems
\begin{equation}\label{linearsystemtemp}
    (A^TA+B^TB)y_i=(A^TA-B^TB)q_i.
\end{equation}
They can be solved by either a direct solver \cite{golub2013matrix}, e.g.,
by computing the
sparse Cholesky factorization of $A^TA+B^TB$, or an iterative solver, e.g., the
(preconditioned) CG method \cite{saad2003iterative}.
Alternatively, these linear systems amount to the least squares problems
\begin{equation}\label{leastsquaretemp}
    \min_{y\in\mathbb{R}^n} \biggl\|\begin{bmatrix}
        A\\
        B
    \end{bmatrix}y-\begin{bmatrix}
        A\\
        -B
    \end{bmatrix}q_i\biggr\|,
\end{equation}
which are generally better conditioned than the linear systems in
\eqref{linearsystemtemp},
so that we can make use of a direct solver, e.g.,
the sparse QR factorization, or an iterative
solver, e.g., the LSQR algorithm \cite{bjorck1996numerical}, to solve them more
accurately. We remark that
the CG method for \eqref{linearsystemtemp} is the CGLS or CGNR method
by noticing that \eqref{linearsystemtemp} is the normal equation
of \eqref{leastsquaretemp}. Therefore, the CG method for
\eqref{linearsystemtemp} is mathematically equivalent to the LSQR algorithm
for \eqref{leastsquaretemp} when the initial guess for $y_i$ in CG is zero vector and the starting vector in LSQR is $[A^T,-B^T]^Tq_i$.
The convergence rates of the CG and LSQR algorithms critically rely on the
size of $\kappa^2([A^T,B^T])$, and they converge
fast when $[A^T,B^T]^T$ is well conditioned \cite{bjorck1996numerical}.

Recall \Cref{sec: preliminaries}. More generally, in order to compute $Pz$
for a given vector $z$, we
can solve the SPD linear systems with $d$ right hand sides:
\begin{equation}\label{linearsystemtempp}
    (\gamma A^TA+\zeta B^TB)y_i=(\eta A^TA-\xi B^TB)q_i,\ i=1,2,\ldots,d
\end{equation}
for some suitably chosen quadruple $\{\gamma, \zeta, \eta, \xi\}$
by either the sparse Cholesky factorization or the (preconditioned) CG method.

If $\gamma\neq 0$, \eqref{linearsystemtempp} is mathematically equivalent to
$y_i = \bar{y}_i +\frac{\eta}{\gamma} q_i$, where
\begin{equation}\label{linearsystem variant}
    (\gamma A^TA+\zeta B^TB)\bar{y}_i=(-\xi - \frac{\eta\zeta}{\gamma}) B^TB
    q_i,\ i=1,2,\ldots,d.
\end{equation}
Therefore, instead of \eqref{linearsystemtempp},
we prefer to solve the linear systems in \eqref{linearsystem variant}
since we only need to form $B^TBq_i$ and save the cost of computing $A^TAq_i$.
For these vectors $q_i$,
solving \eqref{linearsystemtempp} or \eqref{linearsystem variant}
has no effect on the accuracy and numerical stability of a direct solver
and, generally, has little effect on the convergence of the CG algorithm.

If $\gamma\neq 0$ and $\zeta\neq 0$,
the linear systems in \eqref{linearsystemtempp} are the normal equations of the generally better conditioned least squares problems
\begin{equation}\label{leastsquaretempp}
    \min \biggl\|\begin{bmatrix}
        \sqrt{\gamma}A\\
       \sqrt{\zeta} B
    \end{bmatrix}y-\begin{bmatrix}
        \frac{\eta}{\sqrt{\gamma}}A\\
        -\frac{\xi}{\sqrt{\zeta}}B
    \end{bmatrix}q_i\biggr\|,
\end{equation}
and we can solve them by the sparse QR factorization or the LSQR algorithm.

Notice that by \Cref{lem:stochastic estimation}, given the sample number $M$,
\Cref{alg:subspace dimension}, i.e.,
the computation of $H_M$, needs the solutions of
$dM$ linear systems or least squares problems; given
the subspace dimension $p$, Step 3 of \Cref{alg:PGSVD}
needs the solutions of $dp$ linear systems or least squares problems
at each iteration, whose total number is $k_cdp$ with $k_c$
the iterations needed for convergence.

Since a large number of linear systems or least squares problems are
to be solved, for the overall efficiency, we definitely
prefer a direct solver whenever the sparse Cholesky or QR factorization is
computationally viable
since we only need to compute it once and use it repeatedly for solving
more than one problems. It is particularly preferable that $A$ or $B$
is banded and rectangular, for which we take $\{\gamma,\zeta\}=\{1,0\}$
or $\{\gamma,\zeta\}=\{0,1\}$,
so that the resulting coefficient matrix involves only $A$ or $B$;
if both $A$ and $B$ are banded, we take $\{\gamma,\zeta\}=\{1,1\}$,
and the SPD coefficient matrix is then $A^TA+B^TB$. For general sparse
$A$ and $B$, the sparse Cholesky or QR factorization may or may not be
affordable. If its computation and storage is prohibitive,
we have to resort to the CG or LSQR algorithm. In this case,
we are free to choose $\gamma$ and $\zeta$ to obtain
better conditioned SPD linear systems or least squares problems so that the CG
or LSQR converges faster. A natural choice is $\gamma=\zeta=1$, and the
resulting $A^TA+B^TB$ is
well conditioned, provided one of $A$ and $B$ is well conditioned,
which is true in many applications, e.g., linear discrete
ill-posed problems.

\section{A convergence analysis}\label{sec: convergence analysis}
Suppose $p\geq n_{gv}$. We partition $X=[X_p,X_{p,\perp}]$, and
set up the following notation:
\begin{align}
    X_p        & =[x_1,\dots, x_p],  \ \ X_{p,\perp}=[x_{p+1},\dots, x_{n}],\label{vdef}        \\
    \Gamma_{p} & =\diag(\gamma_1, \dots, \gamma_p), \ \
    \Gamma_p^{\prime}=\diag(\gamma_{p+1}, \dots, \gamma_{n}),\label{gammadef} \\
    \Sigma_{p} & =\diag(c_1, \dots,c_p), \ \ \Sigma_p^{\prime}=\diag(c_{p+1}, \dots, c_{n}). \label{sigmadef}
\end{align}

Step 3, Step 5 and Step 10 of \Cref{alg:PGSVD} clearly show
\begin{displaymath}
\mathcal{X}^{(k+1)}={\mathrm{span}}\{\hat{X}^{(k+1)}\}
={\mathrm{span}}\{X^{(k+1)}\}
    =P{\mathrm{span}}\{\hat{X}^{(k)}\}.
\end{displaymath}
Inductively, we thus obtain
\begin{equation}\label{Xke}
    \mathcal{X}^{(k)}={\mathrm{span}}\{\hat{X}^{(k)}\}
    =P^k{\mathrm{span}}\{\hat{X}^{(0)}\}=P^k{\mathrm{span}}\{X^{(0)}\}.
\end{equation}
We next establish convergence results on $\mathcal{X}^{(k)}$ and the
Ritz values $(\hat{c}_i^{(k)}, \hat{s}_i^{(k)})$.

\begin{theorem}\label{Thm:subspace convergence}
    Suppose $\gamma_p>\gamma_{p+1}$
    and $X_p^T H X^{(0)}$ is nonsingular. Then
    \begin{equation}\label{xk}
        \hat{X}^{(k)}=(X_p+X_{p,\perp}E^{(k)})(M^{(k)})^{-\frac{1}{2}}W^{(k)}
    \end{equation}
    with
    \begin{align}
         & E^{(k)}=\Gamma_p^{\prime k}X_{p,\perp}^T H X^{(0)}(X_p^T H X^{(0)})^{-1}\Gamma_{p}^{-k},\label{Ek} \\
         & M^{(k)}=I+(E^{(k)})^TE^{(k)}\label{mkdef}
    \end{align}
    and $W^{(k)}$ being an orthogonal matrix,
    \begin{equation}\label{normek}
        \|E^{(k)}\|\leq \biggl(\frac{\gamma_{p+1}}{\gamma_p}\biggr)^k\|E^{(0)}\|,
    \end{equation}
    and the distance $\epsilon^{(k)}$ between $\mathcal{X}^{(k)}$ and ${\mathrm{span}}\{X_p\}$ (cf. \eqref{C-distdef}) satisfies
    \begin{equation}\label{dist}
        \epsilon^{(k)} ={\mathrm{dist}}(\mathcal{X}^{(k)}, {\mathrm{span}}\{X_p\})= \frac{\|E^{(k)}\|}{\sqrt{1+\|E^{(k)}\|^2}}\leq \biggl(\frac{\gamma_{p+1}}{\gamma_p}\biggr)^k\|E^{(0)}\|.
    \end{equation}
    Label $(\hat{c}_i^{(k)},\hat{s}_i^{(k)}, \hat{u}_i^{(k)},\hat{v}_i^{(k)},\hat{x}_i^{(k)}), i=1,2,\dots,n_{gv}$ such that
    $\hat{c}_{1}^{(k)},\dots, \hat{c}_{n_{gv}}^{(k)}$ are in the same order as $c_1,\dots, c_{n_{gv}}$ in \Cref{Thm:accuracyps},
    and write the Ritz values $\hat{\sigma}_i^{(k)}=\hat{c}_i^{(k)}/\hat{s}_i^{(k)}$.
    Then
    \begin{equation}\label{chord}
        \chi\bigl((\hat{\sigma}_i^{(k)})^2,\sigma_i^2\bigr) \leq
        6(\epsilon^{(k)})^2+2(\epsilon^{(k)})^4, \ i=1,2,\ldots,n_{gv}.
    \end{equation}
\end{theorem}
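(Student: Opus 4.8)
The plan is to propagate the subspace iteration through the explicit eigendecomposition $P=X\Gamma X^{-1}=X\Gamma X^{T}H$ with $\Gamma=\diag(\gamma_1,\dots,\gamma_n)$, and then read off all the Ritz data from the resulting closed form of $\hat X^{(k)}$. I would first establish \eqref{xk}. By \eqref{Xke}, $\mathcal X^{(k)}=P^{k}\,\mathrm{span}\{X^{(0)}\}=\mathrm{span}\{X\Gamma^{k}X^{T}HX^{(0)}\}$. Splitting along $X=[X_p,X_{p,\perp}]$ and $\Gamma^{k}=\diag(\Gamma_p^{k},(\Gamma_p^{\prime})^{k})$ gives $X\Gamma^{k}X^{T}HX^{(0)}=X_p\Gamma_p^{k}(X_p^{T}HX^{(0)})+X_{p,\perp}(\Gamma_p^{\prime})^{k}(X_{p,\perp}^{T}HX^{(0)})$. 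Since $\gamma_p>\gamma_{p+1}\ge 0$ forces $\gamma_p>0$, the factor $\Gamma_p$ is invertible, and with $X_p^{T}HX^{(0)}$ nonsingular by hypothesis I factor $\Gamma_p^{k}(X_p^{T}HX^{(0)})$ out on the right; this produces exactly the matrix $E^{(k)}$ of \eqref{Ek} and shows $\mathcal X^{(k)}=\mathrm{span}\{X_p+X_{p,\perp}E^{(k)}\}$. The $H$-orthonormality $X^{T}HX=I$ then yields $(X_p+X_{p,\perp}E^{(k)})^{T}H(X_p+X_{p,\perp}E^{(k)})=I+(E^{(k)})^{T}E^{(k)}=M^{(k)}$, so $H$-orthonormalizing this basis introduces the factor $(M^{(k)})^{-1/2}$ and an ordinary orthogonal $W^{(k)}$ absorbing the freedom in the $H$-orthonormal (Ritz) basis, which is \eqref{xk}.

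For the norm and distance estimates, from \eqref{Ek} one has $E^{(k)}=(\Gamma_p^{\prime})^{k}E^{(0)}\Gamma_p^{-k}$, and submultiplicativity with $\|(\Gamma_p^{\prime})^{k}\|=\gamma_{p+1}^{k}$ and $\|\Gamma_p^{-k}\|=\gamma_p^{-k}$ gives \eqref{normek}. For \eqref{dist} I apply the $H$-distance formula \eqref{C-distdef} with the $H$-orthonormal complement $X_{p,\perp}$ of $\mathrm{span}\{X_p\}$ and the $H$-orthonormal basis $\hat X^{(k)}$, so $\epsilon^{(k)}=\|X_{p,\perp}^{T}H\hat X^{(k)}\|$. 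Substituting \eqref{xk} and using $X_{p,\perp}^{T}HX_p=0$, $X_{p,\perp}^{T}HX_{p,\perp}=I$ collapses this to $\|E^{(k)}(M^{(k)})^{-1/2}\|$; passing to the SVD of $E^{(k)}$ shows the singular values of $E^{(k)}(M^{(k)})^{-1/2}$ are $\sigma_j/\sqrt{1+\sigma_j^{2}}$, whose maximum is $\|E^{(k)}\|/\sqrt{1+\|E^{(k)}\|^{2}}$; this is the equality in \eqref{dist}, and $t/\sqrt{1+t^{2}}\le t$ together with \eqref{normek} gives the inequality.

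The crux is the chordal estimate \eqref{chord}. Because $\hat X^{(k)}$ is $H$-orthonormal and $H=A^{T}A+B^{T}B$, the projected matrices obey $(\hat X^{(k)})^{T}A^{T}A\hat X^{(k)}+(\hat X^{(k)})^{T}B^{T}B\hat X^{(k)}=I$, so by \eqref{cpp} the Ritz values satisfy $(\hat\sigma_i^{(k)})^{2}=\hat c_i^{2}/(1-\hat c_i^{2})$, where the $\hat c_i^{2}$ are the eigenvalues of $\hat A_2:=(\hat X^{(k)})^{T}A^{T}A\hat X^{(k)}$. Feeding \eqref{xk} into $\hat A_2$ and using that $X^{T}A^{T}AX=\Sigma_A^{T}\Sigma_A$ is diagonal, so the cross term $X_p^{T}A^{T}AX_{p,\perp}$ vanishes, I obtain $\hat A_2=(W^{(k)})^{T}(M^{(k)})^{-1/2}(\Sigma_p^{2}+(E^{(k)})^{T}(\Sigma_p^{\prime})^{2}E^{(k)})(M^{(k)})^{-1/2}W^{(k)}$, whose eigenvalues are the generalized eigenvalues of the Hermitian pair $(\Sigma_p^{2}+(E^{(k)})^{T}(\Sigma_p^{\prime})^{2}E^{(k)},\,M^{(k)})$. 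The vanishing of the cross term is precisely what removes the first-order term and produces quadratic convergence: comparing the generalized Rayleigh quotient of this pair against the ordinary Rayleigh quotient of $\Sigma_p^{2}$, and invoking the identity $\|E^{(k)}\|^{2}/(1+\|E^{(k)}\|^{2})=(\epsilon^{(k)})^{2}$ from the distance step, the Courant--Fischer min-max principle yields $|\hat c_i^{2}-c_i^{2}|\le (\epsilon^{(k)})^{2}$ under the matched decreasing labeling. Finally, writing the chordal metric in homogeneous coordinates gives $\chi((\hat\sigma_i^{(k)})^{2},\sigma_i^{2})=|\hat c_i^{2}-c_i^{2}|/(\sqrt{\hat c_i^{4}+\hat s_i^{4}}\sqrt{c_i^{4}+s_i^{4}})$, and since $a^{2}+(1-a)^{2}\ge \frac{1}{2}$ both radicals are at least $1/\sqrt2$; assembling these bounds and carrying the constants, with the $(\epsilon^{(k)})^{4}$ term arising from the coarser bookkeeping through $\|E^{(k)}\|^{2}=(\epsilon^{(k)})^{2}/(1-(\epsilon^{(k)})^{2})$, delivers \eqref{chord}.

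I expect the last paragraph to be the main obstacle: identifying the correct $H$-normalized projected pencil, establishing that the Ritz value error is genuinely second order in $\epsilon^{(k)}$, which hinges both on the block-diagonal structure of $X^{T}A^{T}AX$ annihilating the first-order term and on controlling the non-symmetric congruence by $(M^{(k)})^{-1/2}$ via a change of metric, and then converting the eigenvalue error $|\hat c_i^2-c_i^2|$ of the cross-product problem into a chordal bound on the generalized singular values $\hat\sigma_i^{(k)}$ themselves.
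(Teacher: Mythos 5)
Your proposal is correct and, for the bulk of the theorem, follows the same route as the paper: the closed form \eqref{xk} via splitting $X^{(0)}$ along $X=[X_p,X_{p,\perp}]$ and factoring out $\Gamma_p^k(X_p^THX^{(0)})$, the norm bound \eqref{normek} by submultiplicativity, the distance \eqref{dist} from $\|X_{p,\perp}^TH\hat X^{(k)}\|=\|E^{(k)}(M^{(k)})^{-1/2}\|$ (your SVD computation makes explicit an equality the paper merely asserts), and the identical projected matrix $(M^{(k)})^{-1/2}\bigl(\Sigma_p^2+(E^{(k)})^T(\Sigma_p^{\prime})^2E^{(k)}\bigr)(M^{(k)})^{-1/2}$ together with the same chordal-metric reduction $\chi\le 2|(\hat c_i^{(k)})^2-c_i^2|$. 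Where you genuinely diverge is the eigenvalue-perturbation step: the paper sets $F^{(k)}=I-(M^{(k)})^{-1/2}$, bounds $\|F^{(k)}\|\le(\epsilon^{(k)})^2$, splits the deviation from $\Sigma_p^2$ into two norm terms to get $3(\epsilon^{(k)})^2+(\epsilon^{(k)})^4$, and invokes Weyl's theorem; you instead view the Ritz values as generalized eigenvalues of the pair $(\Sigma_p^2+(E^{(k)})^T(\Sigma_p^{\prime})^2E^{(k)},\,M^{(k)})$ and compare generalized Rayleigh quotients via min--max. Your route is sound and in fact sharper: since $\|\Sigma_p\|\le1$ and $\|\Sigma_p^{\prime}\|\le1$, the numerator $\bigl|(y^T(E^{(k)})^T(\Sigma_p^{\prime})^2E^{(k)}y)(y^Ty)-(y^T\Sigma_p^2y)(y^T(E^{(k)})^TE^{(k)}y)\bigr|$ is a difference of two nonnegative quantities each bounded by $(y^T(E^{(k)})^TE^{(k)}y)(y^Ty)$, giving the uniform Rayleigh-quotient deviation $\le\|E^{(k)}\|^2/(1+\|E^{(k)}\|^2)=(\epsilon^{(k)})^2$ and hence $|(\hat c_i^{(k)})^2-c_i^2|\le(\epsilon^{(k)})^2$, from which \eqref{chord} follows a fortiori with the better constant $2(\epsilon^{(k)})^2$. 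You leave this min--max comparison at the level of a sketch (and the closing remark about where the $(\epsilon^{(k)})^4$ term "arises" is a red herring once your sharper bound is in hand, since the paper's weaker bound is then automatic), but the missing details are routine and the argument stands.
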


\begin{proof}
    Expand $X^{(0)}$ as the $H$-orthogonal direct sum of $X_p$ and $X_{p,\perp}$:
    \begin{equation}\label{initial}
        X^{(0)}=X_pX_p^T H X^{(0)}+X_{p,\perp}X_{p,\perp}^T H X^{(0)},
    \end{equation}
    and define
    \begin{equation*}
        E^{(0)} = X_{p,\perp}^T HX^{(0)}(X_p^THX^{(0)})^{-1}.
    \end{equation*}
    Then
    \begin{equation*}
        X^{(0)}(X_p^T HX^{(0)})^{-1}=X_p+X_{p,\perp}E^{(0)}.
    \end{equation*}
    Exploting \eqref{initial}, $PX_p=X_p\Gamma_p$ and $PX_{p,\perp}=X_{p,\perp}\Gamma_p^{\prime}$, we obtain
    \begin{equation}\label{eqsub}
        P^{k}X^{(0)}(X_p^T HX^{(0)})^{-1}\Gamma_p^{-k}=X_p + X_{p,\perp}\Gamma_{p}^{\prime k} E^{(0)}\Gamma_p^{-k}=X_p+X_{p,\perp}E^{(k)}.
    \end{equation}
    with $E^{(k)}$ defined as in \eqref{Ek}. Obviously,
    \begin{equation*}
        \|E^{(k)}\|\leq \biggl(\frac{\gamma_{p+1}}{\gamma_p}\biggr)^k\|E^{(0)}\|\rightarrow 0,
    \end{equation*}
    which is \eqref{normek}.
    By \eqref{Xke} and \eqref{eqsub}, we have
    \begin{equation*}
        {\mathrm{span}}\{\hat{X}^{(k)}\}=P^k{\mathrm{span}}\{X^{(0)}\}
        ={\mathrm{span}}\{X_p+X_{p,\perp}E^{(k)}\}.
    \end{equation*}
    As a result, we can express the $H$-orthonormal $\hat{X}^{(k)}$ as
    \begin{displaymath}
        \hat{X}^{(k)}=(X_p+X_{p,\perp}E^{(k)})(M^{(k)})^{-\frac{1}{2}}W^{(k)},
    \end{displaymath}
    where
    \begin{displaymath}
        M^{(k)}=(X_p+X_{p,\perp}E^{(k)})^{T} H(X_p+X_{p,\perp}E^{(k)})=I+(E^{(k)})^TE^{(k)}
    \end{displaymath}
is the matrix in \eqref{mkdef} and $W^{(k)}$ is an orthogonal matrix.
This establishes \eqref{xk}.

    By the distance definition \eqref{C-distdef}, from \eqref{xk} we have
    \begin{align*}
        \epsilon^{(k)}=\|X_{p,\perp}^T H \hat{X}^{(k)}\|=\|E^{(k)}(M^{(k)})^{-\frac{1}{2}}W^{(k)}\|
        =\frac{\|E^{(k)}\|}{\sqrt{1+\|E^{(k)}\|^2}},
    \end{align*}
    which, together with \eqref{normek}, proves \eqref{dist}.

Exploiting \eqref{eigd} and \eqref{xk}, we obtain
    \begin{align*}
        &\|W^{(k)}(\hat{X}^{(k)})^T A^TA
        \hat{X}^{(k)}(W^{(k)})^T-\Sigma_p^2\|\\
        &=\|(M^{(k)})^{-1/2}(\Sigma_p^2
        +(E^{(k)})^T(\Sigma_{p}^{\prime})^2E^{(k)})(M^{(k)})^{-1/2}-\Sigma_p^2\| \\
        &=\|(M^{(k)})^{-1/2}(\Sigma_p^2
        +(E^{(k)})^T(\Sigma_{p}^{\prime})^2E^{(k)})(M^{(k)})^{-1/2}-\Sigma_p^2\| \\
        & \leq \|(M^{(k)})^{-1/2}\Sigma_p^2(M^{(k)})^{-1/2}-\Sigma_p^2\|+
         \|(M^{(k)})^{-1/2}(E^{(k)})^T(\Sigma_p^{\prime})^2E^{(k)}(M^{(k)})^{-1/2}\|.
    \end{align*}
Let $F^{(k)}=I-(M^{(k)})^{-\frac{1}{2}}.$ Then
    \begin{equation*}
        \|F^{(k)}\|=\|I-(M^{(k)})^{-\frac{1}{2}}\| = 1-\frac{1}{\sqrt{1+\|E^{(k)}\|^2}}\leq \frac{\|E^{(k)}\|^2}{1+\|E^{(k)}\|^2} =(\epsilon^{(k)})^2.
    \end{equation*}
Therefore,
    \begin{align*}
        &\|(M^{(k)})^{-1/2}\Sigma_p^2(M^{(k)})^{-1/2}-\Sigma_p^2\|=
        \|(I-F^{(k)})\Sigma_p^2(1-F^{(k)})-\Sigma_p^2\| \\
        &=\|-\Sigma_p^2F^{(k)}-F^{(k)}\Sigma_p^2+F^{(k)}\Sigma_p^2F^{(k)}\|
        \leq 2(\epsilon^{(k)})^2+(\epsilon^{(k)})^4,
    \end{align*}
which, together with
    \begin{equation*}
        \|(M^{(k)})^{-1/2}(E^{(k)})^T(\Sigma_p^{\prime})^2E^{(k)}(M^{(k)})^{-1/2}\| \leq (\epsilon^{(k)})^2,
    \end{equation*}
establishes
    \begin{equation*}
       \|W^{(k)}(\hat{X}^{(k)})^T A^TA
        \hat{X}^{(k)}(W^{(k)})^T-\Sigma_p^2\| \leq
        3(\epsilon^{(k)})^2+(\epsilon^{(k)})^4.
    \end{equation*}

Since $(\hat{c}_{i}^{(k)})^2,\ i=1,2,\ldots,p$ are the eigenvalues
of $W^{(k)}(\hat{X}^{(k)})^T A^TA \hat{X}^{(k)}(W^{(k)})^T$,
by a perturbation result \cite[Corollary 8.1.6]{golub2013matrix},
from the above relation we obtain
    \begin{equation}\label{alpha}
        |(\hat{c}_{i}^{(k)})^2-c_i^2| \leq  3(\epsilon^{(k)})^2+(\epsilon^{(k)})^4.
    \end{equation}

    On the other hand,
    \begin{align}
       \chi\bigl((\hat{\sigma}_i^{(k)})^2,\sigma_i^2\bigr)&=
        \chi\biggl(\biggl(\frac{\hat{c}_{i}^{(k)}}{\hat{s}_{i}^{(k)}}\biggr)^2,\biggl(\frac{c_i}{s_i}\biggr)^2\biggr)
        \notag\\
        &= \frac{|(c_i^{(k)})^2s_i^2-(s_i^{(k)})^2c_i^2|}{\sqrt{c_i^4+s_i^4}\sqrt{(c_i^{(k)})^4+(s_i^{(k)})^4}}
        \notag\\
        &= \frac{|(c_i^{(k)})^2(1-c_i^2)-(1-(c_i^{(k)})^2)c_i^2|}{\sqrt{c_i^4+s_i^4}\sqrt{(c_i^{(k)})^4+(s_i^{(k)})^4}}
        \notag\\
        &= \frac{|(c_i^{(k)})^2-c_i^2|}{\sqrt{c_i^4+s_i^4}\sqrt{(c_i^{(k)})^4+(s_i^{(k)})^4}}. \label{chi}
    \end{align}
    Since
    \begin{align*}
        &2(c_i^4+s_i^4) \geq (c_i^2+s_i^2)^2=1, \\
        &2((c_i^{(k)})^4+(s_i^{(k)})^4) \geq ((c_i^{(k)})^2+(s_i^{(k)})^2)^2 = 1,
    \end{align*}
    it is from \eqref{chi} that
    \begin{equation}\label{chordandalpha}
        \chi\bigl((\hat{\sigma}_i^{(k)})^2,\sigma_i^2\bigr) \leq 2|(c_i^{(k)})^2-c_i^2|,
    \end{equation}
    which, together with \eqref{alpha}, proves \eqref{chord}.
\end{proof}

The following theorem establishes convergence results on the
left and right Ritz vectors $\hat{u}_i^{(k)},\hat{v}_i^{(k)},\hat{x}_i^{(k)}$
and a new and possibly sharper error bound on $\hat{\sigma}_i^{(k)}$.

\begin{theorem}\label{Thm:quintuples convergence}
    Let
    $\beta^{(k)}=\|P^{(k)}H^{-\frac{1}{2}}A^TAH^{-\frac{1}{2}}(I-P^{(k)})\|$,
    where $P^{(k)}$ is the orthogonal projector onto
    ${\mathrm{span}}\{H^{\frac{1}{2}}\hat{X}^{(k)}\}$.
    Assume that each generalized singular value $\sigma=c/s$ of $(A,B)$ with
    $c \in [c_{\min}, c_{\max}]$ is simple, and define
    \begin{equation*}
        \delta_i^{(k)}= \underset{j\neq i}{\min}|c_i^2-(\hat{c}_j^{(k)})^2|,\
        i=1,2,\ldots,n_{gv}.
    \end{equation*}
    Then for $i=1,2,\ldots,n_{gv}$ we have
    \begin{align}
        \sin\angle(\hat{x}_{i}^{(k)},x_i)_{H} & \leq
        \sqrt{1+\frac{(\beta^{(k)})^2}{(\delta_i^{(k)})^2}}
        \biggl(\frac{\gamma_{p+1}}{\gamma_i}\biggr)^k \|E^{(0)}\|,
        \label{sinex}                \\
        \sin\angle(\hat{u}_{i}^{(k)},u_i) & \leq
        \frac{\|\Sigma_{A}\|}{\hat{c}_{i}^{(k)}}\sin\angle(\hat{x}_{i}^{(k)},x_i)_{H},     \label{sineu}                    \\
        \sin\angle(\hat{v}_{i}^{(k)},v_i) & \leq
        \frac{\|\Sigma_{B}\|}{\hat{s}_{i}^{(k)}}\sin\angle(\hat{x}_{i}^{(k)},x_i)_{H},\label{sinev} \\
        \chi\bigl((\hat{\sigma}_i^{(k)})^2,\sigma_i^2\bigr)&\leq  2\sin^2\angle(\hat{x}_{i}^{(k)},x_i)_{H}. \label{chordcondition}
    \end{align}
\end{theorem}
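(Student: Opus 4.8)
The plan is to reduce everything to a standard symmetric Rayleigh--Ritz problem via the $H^{1/2}$-transformation, after which all four bounds follow from classical estimates together with the structure of the GSVD. First I would introduce the symmetric matrices $\bar{G}=H^{-1/2}A^TAH^{-1/2}$ and $\bar{P}=H^{1/2}PH^{-1/2}$. Using $A^TA=HX\Sigma_{A}^T\Sigma_{A}X^TH$ and $XX^T=H^{-1}$ (both consequences of \eqref{eigd} and $X^{-1}=X^TH$), one checks that $\bar{G}=(H^{1/2}X)\Sigma_{A}^T\Sigma_{A}(H^{1/2}X)^T$ is symmetric with orthonormal eigenvectors $\bar{x}_i=H^{1/2}x_i$ and eigenvalues $c_i^2\in[0,1]$, while $\bar{P}$ is symmetric positive semidefinite with the same eigenvectors $\bar{x}_i$ and eigenvalues $\gamma_i$. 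Since $H^{1/2}X$ is orthogonal, the $H$-angle obeys $\angle(\hat{x}_i^{(k)},x_i)_H=\angle(H^{1/2}\hat{x}_i^{(k)},\bar{x}_i)$, and the Ritz relation \eqref{cpp} shows that $(\hat{c}_i^{(k)})^2$ is exactly the Rayleigh quotient $(\hat{\bar{x}}_i^{(k)})^T\bar{G}\hat{\bar{x}}_i^{(k)}$ of the Euclidean unit Ritz vector $\hat{\bar{x}}_i^{(k)}=H^{1/2}\hat{x}_i^{(k)}$. Thus the GSVD extraction coincides with the standard Rayleigh--Ritz projection of $\bar{G}$ onto $\bar{\mathcal{X}}^{(k)}={\mathrm{span}}\{H^{1/2}\hat{X}^{(k)}\}=\bar{P}^k{\mathrm{span}}\{H^{1/2}X^{(0)}\}$, and $\beta^{(k)}$ is precisely the coupling $\|P^{(k)}\bar{G}(I-P^{(k)})\|$.

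For \eqref{sinex} I would first sharpen the subspace estimate of \Cref{Thm:subspace convergence} to an index-dependent rate. Writing $\bar{x}_i=\big(H^{1/2}(X_p+X_{p,\perp}E^{(k)})e_i\big)-\bar{X}_{p,\perp}E^{(k)}e_i$ with $\bar{X}_{p,\perp}=H^{1/2}X_{p,\perp}$, the first term lies in $\bar{\mathcal{X}}^{(k)}$ by \eqref{xk}, so $\sin\angle(\bar{x}_i,\bar{\mathcal{X}}^{(k)})\leq\|\bar{X}_{p,\perp}E^{(k)}e_i\|=\|E^{(k)}e_i\|$; since $E^{(k)}=\Gamma_p^{\prime k}E^{(0)}\Gamma_p^{-k}$ gives $E^{(k)}e_i=\gamma_i^{-k}\Gamma_p^{\prime k}E^{(0)}e_i$, I obtain $\|E^{(k)}e_i\|\leq(\gamma_{p+1}/\gamma_i)^k\|E^{(0)}\|$, the per-index analogue of \eqref{normek}. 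I would then invoke the classical symmetric Rayleigh--Ritz single-vector bound (cf. \cite{parlett1998symmetric}), which amplifies the eigenvector--subspace angle by the factor $\sqrt{1+(\beta^{(k)})^2/(\delta_i^{(k)})^2}$ with $\delta_i^{(k)}$ the gap between $(\hat{c}_i^{(k)})^2$ and the remaining Ritz values, yielding \eqref{sinex}.

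The left-vector bounds \eqref{sineu}--\eqref{sinev} I would prove directly from $\hat{u}_i^{(k)}=A\hat{x}_i^{(k)}/\hat{c}_i^{(k)}$ (from \eqref{Ax=cu, Bx=sv}) and $u_i=Ax_i/c_i$, both unit. Decomposing $\hat{x}_i^{(k)}=\cos\phi\,x_i+\sin\phi\,w$ with $w$ $H$-orthogonal to $x_i$, $\|w\|_H=1$ and $\phi=\angle(\hat{x}_i^{(k)},x_i)_H$, and using $Ax_i=c_iu_i$, I get $(I-u_iu_i^T)A\hat{x}_i^{(k)}=\sin\phi\,(I-u_iu_i^T)Aw$, whence $\sin\angle(\hat{u}_i^{(k)},u_i)=\|(I-u_iu_i^T)A\hat{x}_i^{(k)}\|/\hat{c}_i^{(k)}\leq\sin\phi\,\|Aw\|/\hat{c}_i^{(k)}$. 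Since $\|Aw\|^2=(H^{1/2}w)^T\bar{G}(H^{1/2}w)\leq\|\bar{G}\|=\|\Sigma_{A}\|^2$, this gives \eqref{sineu}, and \eqref{sinev} follows identically with $B$, $\Sigma_{B}$ and $\hat{s}_i^{(k)}$.

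Finally, for \eqref{chordcondition} I would write $\hat{\bar{x}}_i^{(k)}=\cos\phi\,\bar{x}_i+\sin\phi\,g$ with $g\perp\bar{x}_i$ unit; the Rayleigh-quotient expansion and $\bar{G}\bar{x}_i=c_i^2\bar{x}_i$ give $(\hat{c}_i^{(k)})^2-c_i^2=\sin^2\phi\,(g^T\bar{G}g-c_i^2)$, and since the eigenvalues of $\bar{G}$ lie in $[0,1]$ we have $|g^T\bar{G}g-c_i^2|\leq 1$, so $|(\hat{c}_i^{(k)})^2-c_i^2|\leq\sin^2\angle(\hat{x}_i^{(k)},x_i)_H$; combining with \eqref{chordandalpha} establishes \eqref{chordcondition}. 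I expect the main obstacle to be the single-vector Rayleigh--Ritz amplification step in \eqref{sinex}: unlike the subspace estimate it must control the \emph{specific} Ritz vector through both the Ritz-value separation $\delta_i^{(k)}$ and the coupling $\beta^{(k)}$, and it is precisely here that the equivalence with the symmetric problem for $\bar{G}$ is essential so that the classical bound applies verbatim.
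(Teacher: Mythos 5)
Your proposal is correct and follows essentially the same route as the paper: pass to the symmetric matrix $H^{-\frac{1}{2}}A^TAH^{-\frac{1}{2}}$ via the $H^{\frac{1}{2}}$-transformation, bound $\sin\angle(H^{\frac{1}{2}}x_i,{\mathrm{span}}\{H^{\frac{1}{2}}\hat{X}^{(k)}\})$ by $\|E^{(k)}e_i\|\le(\gamma_{p+1}/\gamma_i)^k\|E^{(0)}\|$, amplify by the classical Rayleigh--Ritz factor $\sqrt{1+(\beta^{(k)})^2/(\delta_i^{(k)})^2}$, and obtain \eqref{sineu}--\eqref{sinev} from the $H$-orthogonal decomposition $\hat{x}_i^{(k)}=x_i\cos\phi+z\sin\phi$ together with $u_i^TAz=0$. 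The only cosmetic differences are that you derive the eigenvalue estimate $|(\hat{c}_i^{(k)})^2-c_i^2|\le\sin^2\phi$ directly from the Rayleigh-quotient expansion and cite Parlett where the paper cites Saad's Theorem 4.6 and Proposition 4.5.
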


\begin{proof}
   It is deduced from \eqref{cpp} that $((\hat{c}_i^{(k)})^2,H^{\frac{1}{2}}\hat{x}_{i}^{(k)})$,
    $i=1,2,\ldots,n_{gv}$ are the Ritz pairs of $H^{-\frac{1}{2}}A^TAH^{-\frac{1}{2}}$ with respect to
    ${\mathrm{span}}\{H^{\frac{1}{2}}\hat{X}^{(k)}\}$.
    Notice from \eqref{gsvd} and \eqref{eigd} that
    the eigendecomposition of $H^{-\frac{1}{2}}A^TAH^{-\frac{1}{2}}$ is
    \begin{equation*}
        H^{-\frac{1}{2}}A^TAH^{-\frac{1}{2}}=H^{\frac{1}{2}}X\Sigma_{A}^T\Sigma_{A}X^TH^{\frac{1}{2}},
    \end{equation*}
    whose eigenvector matrix is the orthogonal
    $H^{\frac{1}{2}}X$ and eigenvalues are $c_i^2,\ i=1,2,\dots,n$.
    A direct application of \cite[Theorem 4.6, Proposition 4.5]{saad2011numerical}
    to our case yields
    \begin{align}
        \sin\angle(H^{\frac{1}{2}}\hat{x}_{i}^{(k)},H^{\frac{1}{2}}x_i) &
        \leq \sqrt{1+\frac{(\beta^{(k)})^2}{(\delta_i^{(k)})^2}} \sin\angle(H^{\frac{1}{2}}x_i,
        {\mathrm{span}}\{H^{\frac{1}{2}}\hat{X}^{(k)}\}),\label{ab}      \\
        |(\hat{c}_i^{(k)})^2-c_i^2| &\leq \|H^{-\frac{1}{2}}A^TAH^{-\frac{1}{2}} - c_i^2
        I\|\sin^2\angle(H^{\frac{1}{2}}\hat{x}_{i}^{(k)},H^{\frac{1}{2}}x_i) \nonumber\\
        & \leq \sin^2\angle(H^{\frac{1}{2}}\hat{x}_{i}^{(k)},H^{\frac{1}{2}}x_i).\label{relation}
    \end{align}

   Making use of \eqref{xk}, \eqref{Ek} and \eqref{gammadef}, we obtain
    \begin{align*}
        \sin\angle(H^{\frac{1}{2}}x_i, {\mathrm{span}}\{H^{\frac{1}{2}}\hat{X}^{(k)}\}) &=\sin\angle(H^{\frac{1}{2}}x_i,{\mathrm{span}}\{H^{\frac{1}{2}}\hat{X}^{(k)}(W^{(k)})^T(M^{(k)})^{1/2}\})\\
        &= \sin\angle(H^{\frac{1}{2}}x_i,{\mathrm{span}}\{H^{\frac{1}{2}}(X_p+X_{p,\perp}E^{(k)})\}) \\
        & \leq\sin\angle(H^{\frac{1}{2}}x_i,H^{\frac{1}{2}}(x_i+X_{p,\perp}E^{(k)}e_i))  \\
        & \leq \|E^{(k)}e_i\|=\|\Gamma_p^{\prime k} E^{(0)}\Gamma_{p}^{-k}e_i\|
        \leq\|\Gamma_p^{\prime k} E^{(0)}\|\gamma_{i}^{-k}    \\
        & \leq \biggl(\frac{\gamma_{p+1}}{\gamma_i}\biggr)^k \|E^{(0)}\|.
    \end{align*}
    Substituting the last inequality into \eqref{ab} gives
    \begin{equation*}
        \sin\angle(H^{\frac{1}{2}}\hat{x}_{i}^{(k)},H^{\frac{1}{2}}x_i)  \leq \sqrt{1+\frac{(\beta^{(k)})^2}{(\delta_i^{(k)})^2}}\biggl(\frac{\gamma_{p+1}}{\gamma_i}\biggr)^k \|E^{(0)}\|,
    \end{equation*}
    which, combining \eqref{chordandalpha}, \eqref{relation} and
    \begin{equation*}
        \sin\angle(\hat{x}_{i}^{(k)},x_i)_{H}=
        \sin\angle(H^{\frac{1}{2}}\hat{x}_{i}^{(k)},H^{\frac{1}{2}}x_i),
    \end{equation*}
    proves \eqref{sinex} and \eqref{chordcondition}.

    In term of the notations of \Cref{alg:PGSVD}, \eqref{Ax=cu, Bx=sv} reads as
    \begin{equation*}
        A\hat{x}_{i}^{(k)}=\hat{c}_i^{(k)}\hat{u}_{i}^{(k)}, \quad B\hat{x}_{i}^{(k)}=\hat{s}_i^{(k)}\hat{v}_{i}^{(k)}.
    \end{equation*}
    Decompose $\hat{x}_{i}^{(k)}$ into the $H$-orthogonal direct sum:
    \begin{equation*}
        \hat{x}_{i}^{(k)}=x_i\cos\angle(\hat{x}_{i}^{(k)},x_i)_{H}+
        z\sin\angle(\hat{x}_{i}^{(k)},x_i)_{H},
    \end{equation*}
    where $\langle z,x_i\rangle_{H}=0$ and $\|z\|_{H}=1$.
    Write $\phi_i=\angle(\hat{x}_{i}^{(k)},x_i)_{H}$.
    Then from \eqref{gsvdvector} we obtain
    \begin{align}
        \hat{c}_{i}^{(k)}\hat{u}_{i}^{(k)}=A\hat{x}_{i}^{(k)}=A(x_i\cos\phi_i+z\sin\phi_i)=c_i u_i\cos\phi_i+Az\sin\phi_i,\label{uhat}\\
        \hat{s}_i^{(k)}\hat{v}_{i}^{(k)}=B\hat{x}_{i}^{(k)}=B(x_i\cos\phi_i+z\sin\phi_i)=s_i v_i\cos\phi_i+Bz\sin\phi_i.\label{vhat}
    \end{align}
From \eqref{gsvdvector}, since $Az$ and $Bz$ are linear combinations of
    the columns of $U$ and $V$ other than $u_i$ and $v_i$, we have
    \begin{equation*}
        u_i^TAz=0, \         v_i^TBz=0.
    \end{equation*}
    It follows from \eqref{uhat} and \eqref{vhat} that
    \begin{align}
        \sin\angle(\hat{u}_{i}^{(k)},u_i)=\frac{\|Az\|}{\hat{c}_{i}^{(k)}}\sin\angle(\hat{x}_{i}^{(k)},x_i)_{H} \leq \frac{\|\Sigma_{A}\|}{\hat{c}_{i}^{(k)}}\sin\angle(\hat{x}_{i}^{(k)},x_i)_{H}, \label{uhaterror} \\
        \sin\angle(\hat{v}_{i}^{(k)},v_i) = \frac{\|Bz\|}{\hat{s}_{i}^{(k)}}\sin\angle(\hat{x}_{i}^{(k)},x_i)_{H} \leq \frac{\|\Sigma_{B}\|}{\hat{s}_{i}^{(k)}}\sin\angle(\hat{x}_{i}^{(k)},x_i)_{H}, \label{vhaterror}
    \end{align}
    which prove \eqref{sineu} and \eqref{sinev}.
\end{proof}

Note that $\beta^{(k)}\leq\|H^{-\frac{1}{2}}A^TAH^{-\frac{1}{2}}\|\leq 1$ and
\begin{equation*}
    \delta_i^{(k)}= \underset{j\neq i}{\min}|c_i^2-(\hat{c}_j^{(k)})^2|\rightarrow \underset{j\neq
i,j=1,2,\ldots,p}{\min}|c_i^2-c_j^2|
\end{equation*}
for $k\rightarrow\infty$, and keep
in mind that $\|\Sigma_A\|\leq 1$ and $\|\Sigma_B\|\leq 1$.
\Cref{Thm:quintuples convergence} indicates that the CJ-FEAST
GSVDsolver converges,
and the chordal metric errors of Ritz values are approximately the squares of
those of left and right Ritz vectors, whose convergence rates are
$\gamma_{p+1}/\gamma_i,\, i=1,2,\ldots,n_{gv}$.

The requirement $p\geq n_{gv}$ is critical since otherwise
it is known from \Cref{Thm:accuracyps} that
$\gamma_i\approx 1,\ i=1,2,\ldots,p+1$ as $d$ increases
if none of $c_{\min}$ and $c_{\max}$
corresponds to a generalized singular value of
$(A,B)$. In this case, \Cref{Thm:quintuples convergence} indicates
that \Cref{alg:PGSVD} does not converge or stagnates
because of $\gamma_{p+1}/\gamma_i\approx 1,i=1,2,\ldots,p$.

\section{Numerical experiments}\label{sec:numerical experiments}
We report numerical experiments to confirm our theory, and illustrate
the performance of \Cref{alg:subspace iteration}.
All the numerical experiments were performed on an Intel Core i7-9700,
CPU 3.0GHz, 8GB RAM using Matlab R2022b with
$\epsilon_{\mathrm{mach}}=2.22e-16$ under the Microsoft Windows 10 64-bit system.

\cref{tab:Properties of test matrix pairs} lists the test matrix pairs with
some of their properties and the interval $[c_{\min},c_{\max}]$ of interest,
where we use sparse matrices from the SuiteSparse Matrix Collection
\cite{davis2011university} or their transposes as our test matrices $A$. The
matrices $B$ are the $n\times n$ tridiagonal Toeplitz matrix $B_0$
with $3$ and $1$ being the main and off diagonal elements, and
the $(n-1)\times n$ scaled discrete approximation matrix $B_1$ of the first
order derivative operator of dimension one, i.e.,
\begin{equation}\label{B}
    B_0=\begin{bmatrix}
        3 & 1  &  & \\
        1  & \ddots & \ddots & \\
          &  \ddots  &  \ddots    &  1\\
          &     &  1 & 3
    \end{bmatrix} \mbox{\ and \ }
   B_1=\begin{bmatrix}
        1 & -1  &  & \\
          & \ddots & \ddots & \\
          &        &   1    &  -1
    \end{bmatrix}.
\end{equation}

\begin{table}[htbp]
    \begin{center}
            \begin{tabular}{|c|c|c|c|c|c|c|}
                \hline
                $A$ & $B$ & $m_1$ & $m_2$   & $n$   & $nnz$ & $[c_{\min},c_{\max}]$ \\
                \hline
                r05$^T$ & $B_0$ & 9690 & 5190 & 5190 & 119713  & $[0.99, 0.995]$ \\  
                deter4$^T$ & $B_0$ & 9133 & 3235 & 3235 & 28934  & $[0.9, 0.94]$\\   
                lp\_bnl2$^T$ & $B_0$ & 4486 & 2324 & 2324 & 21996 & $[0.9996, 0.9998]$ \\ 
                G65 & $B_0$ & 8000 & 8000 & 8000 & 55998 & $[0.7, 0.75]$ \\ 
                nopoly & $B_0$ & 10774 & 10774 & 10774 & 103162  &  $[0.9978, 0.9979]$ \\ 
                tomographic1 & $B_0$ & 73159 & 59498 & 59498 & 825987  & $[0.7, 0.75]$ \\ 
                denormal & $B_0$ & 89400 & 89400 & 89400 & 1424442 & $[0.095, 0.1]$ \\ 
                flower\_5\_4$^T$  & $B_1$ & 14721  & 5225 & 5226 & 54392   & $[0.6,0.61]$     \\ 
                dw1024$^T$ & $B_1$ & 2048 & 2047 & 2048 & 14208 & $[0.7, 0.8]$  \\ 
                p05$^T$ & $B_1$ & 9590 & 5089 & 5090 & 69223 & $[0.88, 0.89]$ \\ 
                grid2  & $B_1$ & 3296 & 3295 & 3296 & 19454 & $[0.4, 0.45]$ \\ 
                \hline
            \end{tabular}
        \caption{Properties of the test matrix pairs $(A,B)$
        and the interval $[c_{\min},c_{\max}]$ of interest, where $nnz$ is
        the total number of nonzero entries in $A$ and $B$.}
        \label{tab:Properties of test matrix pairs}
    \end{center}
\end{table}

Since $B_0$ is banded and nonsingular, for $(A, B_0)$ we choose
$\{\gamma,\zeta\}= \{0,1\}$ and solve the linear systems
\eqref{linearsystemtempp} by the Cholesky factorization.
For $(A, B_1)$, we choose $\eta = \xi = \gamma = \zeta = 1$,
and use the LSQR algorithm to solve all the least squares
problems in \eqref{leastsquaretemp} accurately with the relative stopping
tolerance $1e-14$.

We claim an approximate GSVD component $(\hat{c},\hat{s},\hat{u},\hat{v},\hat{x})$
to have converged if the residual norm satisfies
\begin{equation}\label{stopcriterion}
    \|r\| \leq (\hat{s}\|A\|_1 + \hat{c}\|B\|_1)\cdot tol = (\hat{s}\|A\|_1 + \hat{c}\|B\|_1)\cdot 1e-8.
\end{equation}

For a practical choice of the series degree $d$,
the analysis and results in \cite{jia2022afeastsvdsolver} is
straightforwardly adapted to the CJ-FEAST GSVDsolver,
and we choose
\begin{equation}\label{dchoice}
    d=\biggl\lceil \frac{D\pi^2}{(\alpha-\beta)^{4/3}}\biggr\rceil -2
\end{equation}
with $\alpha$ and $\beta$ defined in \Cref{Thm:accuracyps} and
$D\in [2,10]$ for \cref{alg:subspace dimension} and $D \in [1,5]$ for
\cref{alg:PGSVD}.

\subsection{Estimating the exact number of desired GSVD components}
For each test matrix pair $(A,B)$, we first compute the `exact' GSVD by
the Matlab built-in function {\sffamily gsvd} to obtain the exact
$n_{gv}$'s. As we have elaborated in \cite{jia2022afeastsvdsolver},
taking $M=20$ or $30$ suffices to obtain a reliable estimate
$H_M$ of $n_{gv}$ for $d$ suitably large, no matter how large $n_{gv}$'s
are. We only report the results on $H_{20}$ by taking $D = 2$ in
\eqref{dchoice}. \cref{table:number estimation} lists the results.
Clearly, $H_{20}$ is an excellent estimate of $n_{gv}$
and $\lceil 1.1 H_M\rceil \geq n_{gv}$ always holds,
showing that the selection strategy \eqref{pchoice} works perfectly.

\begin{table}[htbp]
    \centering
    \begin{tabular}{|c|c|c|c|}
        \hline
        $A$     & $B$ & $n_{gv}$   &  $H_{20}$ \\
        \hline
        r05$^T$ & $B_0$ & 16 & 19.3 \\
        deter4$^T$ & $B_0$ & 74 & 77.1 \\
        lp\_bnl2$^T$ & $B_0$ & 8 & 8.9 \\
        G65 & $B_0$ & 389 & 380.9 \\
        nopoly & $B_0$ & 64 & 64.0 \\
        tomographic1 & $B_0$ & 1572   &   1570.4  \\
        denormal  & $B_0$ & 716   & 714.5   \\
        flower\_5\_4$^T$  & $B_1$ & 18  &  17.7  \\
        dw1024$^T$ & $B_1$ & 105 & 102.1 \\
        p05$^T$ & $B_1$ & 10 & 13.3 \\
        grid2 & $B_1$ & 99 & 98.0 \\
        \hline
    \end{tabular}
    \caption{The exact $n_{gv}$ and its estimate $H_{20}$.}
    \label{table:number estimation}
\end{table}

\subsection{Convergence behavior of the CJ-FEAST GSVDsolver}
Now we illustrate the convergence behavior of \Cref{alg:PGSVD}.

For (r05$^T$, $B_0$), we take $D=4$ to obtain $d = 138$,
and the subspace dimension $p= \lceil 1.2\times  H_{20} \rceil= 24$.
We find that  all the desired approximate GSVD components converged at
$k = 10$ and the most slowly converged generalized singular value is
$0.990310243962983/\sqrt{1-0.990310243962983^2}=7.131066081847035$.
We plot the residuals norms of its Ritz approximations
and the errors of Ritz values in \Cref{fig: convergence process}(a).

For (deter4$^T$, $B_0$), we take $D=2$ to obtain $d = 211$,
and the subspace dimension $p= \lceil 1.1\times  H_{20}\rceil= 85$.
All the desired GSVD components were found at $k = 7$,
and the most slowly converged $\sigma$ is
$0.903826989207309/\sqrt{1-0.903826989207309^2}=2.112248254628957$.
We plot the convergence curves in \Cref{fig: convergence process}(b).

For (dw1024$^T$, $B_1$), we take $D=2$ to obtain $d = 95$,
and the subspace dimension $p= \lceil 1.3\times  H_{20} \rceil= 133$.
\Cref{alg:subspace iteration} converged at $k = 9$
and the most slowly converged $\sigma$
is $0.799007108644669/\sqrt{1-0.799007108644669^2}=1.328751765599029$.
\Cref{fig: convergence process}(c) draws convergence processes.

For (grid2, $B_1$), we take $D=1$ to obtain $d = 185$,
and the subspace dimension $p= \lceil 1.4\times  H_{20} \rceil= 138$.
All the desired approximate GSVD components converged at $k = 10$ and the
most slowly converged $\sigma$ is
$0.400694564042536/\sqrt{1-0.400694564042536^2}=0.437338408922530$. The
convergence curves see \Cref{fig: convergence process}(d).

\begin{figure}[tbhp]
    \centering
    \subfloat[(r05$^T$, $B_0$), the CJ-FEAST GSVDsolver]{\includegraphics[scale=0.43]{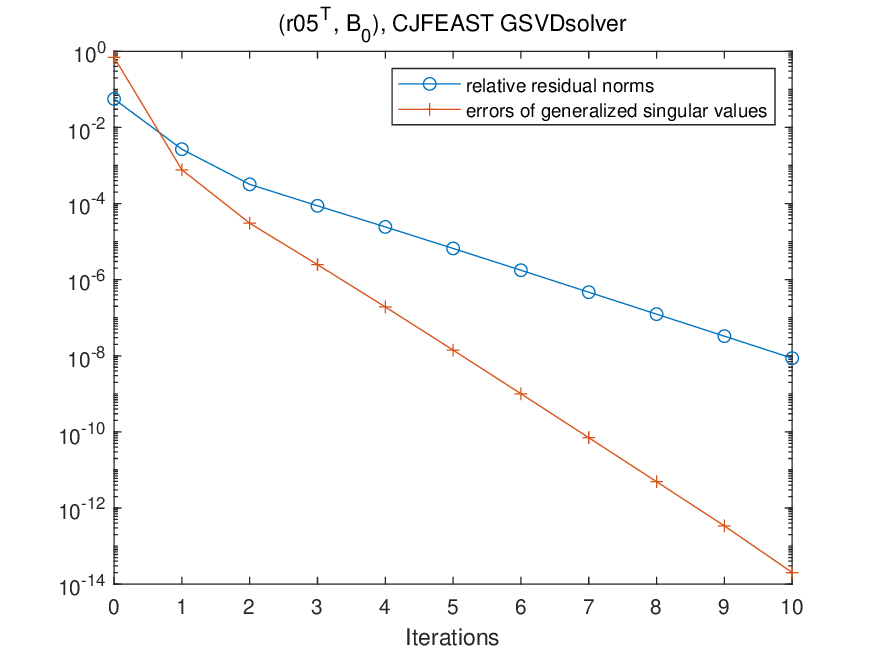}}
    \subfloat[(deter4$^T$, $B_0$), the CJ-FEAST GSVDsolver]{\includegraphics[scale=0.43]{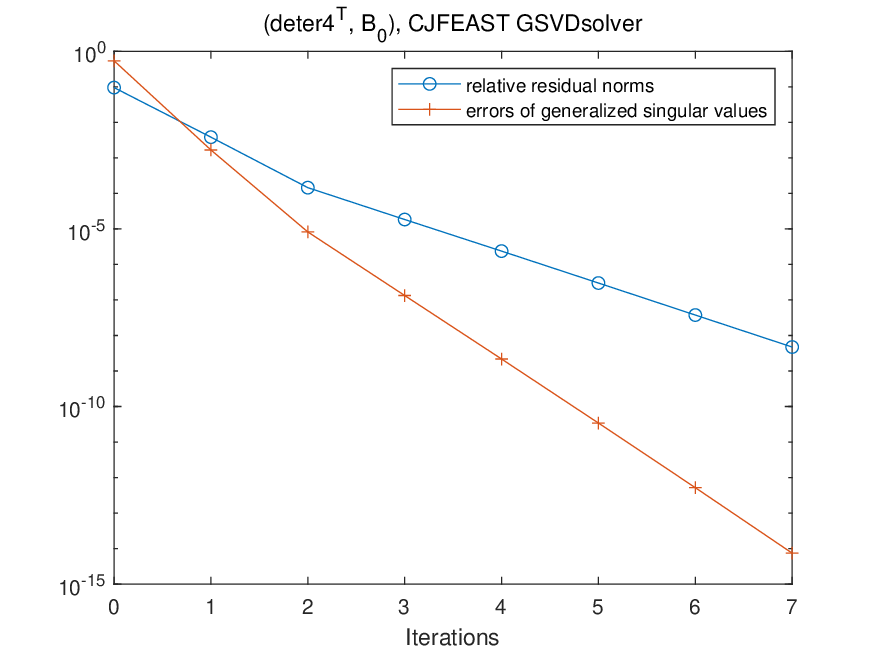}}

    \subfloat[(dw1024$^T$, $B_1$), the CJ-FEAST GSVDsolver]{\includegraphics[scale=0.43]{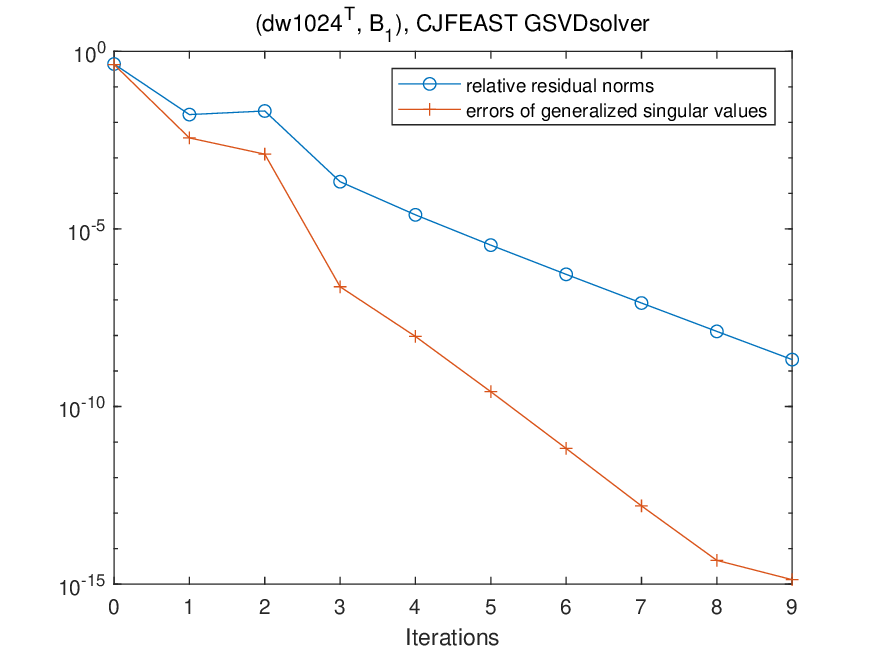}}
    \subfloat[(grid2, $B_1$), the CJ-FEAST GSVDsolver]{\includegraphics[scale=0.43]{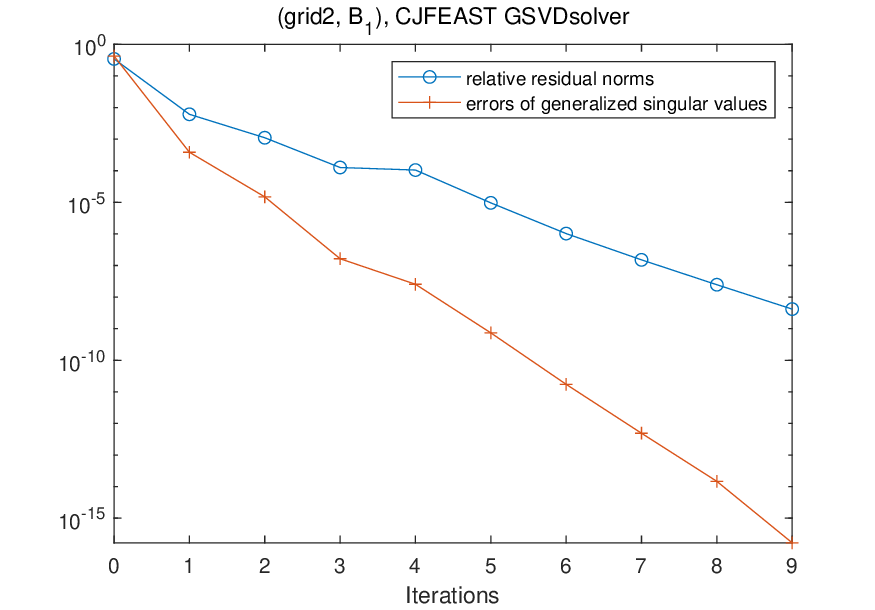}}
    \caption{Convergence processes of Ritz approximations.}
    \label{fig: convergence process}
\end{figure}

The figures clearly demonstrate that the selection
strategy \eqref{pchoice} for the subspace dimension $p$ works well, and the
CJ-FEAST GSVDsolver always converges linearly but quite fast.
In the meantime, the choral metric errors of Ritz values are approximately the
squares of the residual norms,
which confirms the a priori bounds in \Cref{Thm:quintuples convergence}.

\subsection{General performance of the CJ-FEAST GSVDsolver}
We now report numerical results on the test problems in \Cref{tab:Properties of test matrix pairs}. It is seen from \Cref{table:number estimation} that
the numbers $n_{gv}$ of desired GSVD components vary greatly, ranging
from no more than ten to nearly four hundreds. Also, the desired
generalized singular values are either extreme or interior. Therefore,
the degree of difficulty to compute the desired GSVD components
varies widely.

For each test matrix, we take the series degree $d$ as \eqref{dchoice}
using $D=1,2,3$, and select
the subspace dimensions $p=\lceil 1.1H_{20} \rceil$ and $\lceil 1.5H_{20}
\rceil$,
where $H_{20}$ is from \Cref{table:number estimation}.
We then use \Cref{alg:PGSVD} to solve the concerning GSVD problems,
record the total NLS($k$) with $k$ iteration numbers required for
convergence, and list them in \Cref{table:NLS}, where NLS denotes the
total number of linear systems \eqref{linearsystemtempp}
that are solved and can be regarded as a measure of overall
efficiency. For each problem and the same $p$,
we used the same initial $Q^{(0)}$, which was the $Q$-factor of the
QR factorization of a matrix generated randomly
in a normal distribution. \Cref{table:NLS} lists the results obtained.

\begin{table}[htbp]
    \centering
    \begin{tabular}{|c|c|c|c|c|}
    \hline
    \multirow{2}{*}{$(A,B)$} & \multirow{2}{*}{$p$} & \multicolumn{3}{c|}{NLS($k$)}  \\ \cline{3-5}
                      &                   & $D=1$ & $D=2$ & $D=3$ \\ \hline
    \multirow{2}{*}{(r05$^T$, $B_0$)} &   22                & 30492(42) & 31416(21) & 31724(14) \\
                      &    29               & 40194(42) & 41412(21) & 38831(13) \\ \hline
    \multirow{2}{*}{(deter4$^T$, $B_0$)} &  85                 & 142800(16) & 125545(7) & 108120(4) \\
                      &   116                & 133980(11) & 122380(5) & 110664(3) \\ \hline
    \multirow{2}{*}{(lp\_bnl2$^T$, $B_0$)} &    10               & 2000(4) & 3060(3) & 3080(2) \\
                      &  14                 & 2100(3) & 4284(3) & 4312(2) \\ \hline
    \multirow{2}{*}{(G65, $B_0$)} &  419       &  5703428(83) & 5530800(40) & 4977720(24) \\
                      &    572               & 938080(10) & 755040(4) & 1132560(4) \\ \hline
    \multirow{2}{*}{(nopoly, $B_0$)} &  71       & 796620(33) & 726330(15) & 654336(9) \\
                      &    96    & 163200(5) & 196416(3) & 294912(3) \\ \hline
    \multirow{2}{*}{(tomographic1, $B_0$)} & 1728 & 15583104(55) & 15629760(28) & 13934592(17) \\
                      & 2356   & 3541068(10) & 3946300(6) & 4749696(5) \\ \hline
    \multirow{2}{*}{(denormal, $B_0$)} & 786 & 6277782(50) & 5413968(22) & 4649976(13) \\
                      & 1072   & 1397888(9) & 1406464(5) & 1585488(4) \\ \hline
    \multirow{2}{*}{(flower\_5\_4$^T$, $B_1$)} &   20   & 375200(14) & 429120(8) & 402300(5) \\
                      &  27 & 108540(3) & 217242(3) & 217242(2) \\ \hline
    \multirow{2}{*}{(dw1024$^T$, $B_1$)} &   113                & 467368(88) & 461605(43) & 452452(28) \\
                      &    154              & 101332(14) & 87780(6) & 88088(4) \\ \hline
    \multirow{2}{*}{(p05$^T$, $B_1$)} &  15                 & 98100(10) & 117900(6) & 117960(4) \\
                      &   20                & 104640(8) & 104800(4) & 117960(3) \\ \hline
    \multirow{2}{*}{(grid2, $B_1$)} &  108  & 779220(39) & 681156(17) & 601560(10) \\
                      & 147   & 190365(7) & 218148(4) & 245637(3) \\ \hline
    \end{tabular}
    \caption{The results of different $d$ and $p$: NLS and the number $k$
    of iterations used for convergence}
    \label{table:NLS}
\end{table}

We make some comments on \Cref{table:NLS}.
(i) For each problem and the same subspace dimension $p$,
the bigger is the series degree $d$, the fewer iterations are used.
(ii) For each problem and the same series $d$, the bigger is
$p$, the fewer iterations are used; for at least half of the test
problems, the bigger $p$
can speed up the solver very substantially, as is indicated by NLS's
and $k$'s. (iii) For most of the problems, the NLS($k$) do not
change much for the same $p$ and different $d$'s, indicating
that, for the same $p>n_{gv}$, the overall efficiency
is not sensitive to $d$ and a suitably large $d$ works well.
(iv) The bigger $n_{gv}$, the harder it is to solve the problem
because of relatively big $p$'s.


\section{Conclusions}\label{sec: conclusion}

We have proposed a general projection method for computing
a partial GSVD of a large regular matrix pair $(A,B)$,
and particularly presented a CJ-FEAST GSVDsolver
for the computation of the GSVD components of with
the generalized singular values in a given interval.
The projection method works on
the GSVD problem of $(A,B)$ directly and is mathematically
equivalent to the Rayleigh--Ritz projection of the SPD pair
$(A^TA,H)$ with a given right subspace. The CJ-FEAST
GSVDsolver constructs an approximate spectral projector $P$ of $(A^TA,H)$
corresponding to the generalized singular values of interest
by the CJ series expansion rather than a contour integral-based
numerical quadrature or rational filtering that needs to solve several, i.e.,
$\frac{nnodes}{2}p$, shifted and indefinite
large linear systems at each iteration, where the even $nnodes$ is
the number of nodes of an underlying numerical quadrature.
The CJ-FEAST GSVDsolver exploits subspace iteration on $P$ to
generate a sequence of left and right subspaces,
projects the concerning GSVD problem of $(A,B)$ directly onto the
left and right subspaces, and computes the Ritz approximations to
the desired GSVD components.

In terms of the CJ series degree $d$, we have established
a reliable estimate for the number $n_{gv}$ of desired GSVD components
and the accuracy estimates for the approximate spectral projector $P$ and its
eigenvalues. For the CJ-FEAST GSVDsolver, we have established a number of
convergence results on the right searching subspace $\mathcal{X}^{(k)}$ and
the Ritz approximations. Based on
some of the results obtained, we have proposed practical
selection strategies for the series degree $d$
and subspace dimension $p$, and developed the
CJ-FEAST GSVDsolver.

Illuminating numerical experiments have confirmed our theoretical
results and analysis, and demonstrated that the CJ-FEAST GSVDsolver
is practical.


\end{document}